\newcommand{\noun}[1]{\textsc{#1}}
\let\SF@@footnote\footnote
\def\footnote{\ifx\protect\@typeset@protect
    \expandafter\SF@@footnote
  \else
    \expandafter\SF@gobble@opt
  \fi
}
\def\csname SF@gobble@opt \endcsname{\@ifnextchar[%]
  \SF@gobble@twobracket
  \@gobble
}
\edef\SF@gobble@opt{\noexpand\protect
  \expandafter\noexpand\csname SF@gobble@opt \endcsname}
\def\SF@gobble@twobracket[#1]#2{}
\providecommand{\tabularnewline}{\\}
\numberwithin{equation}{section}
\numberwithin{figure}{section}
\theoremstyle{plain}
\newtheorem{thm}{\protect\theoremname}
\theoremstyle{plain}
\newtheorem{lem}[thm]{\protect\lemmaname}
\theoremstyle{plain}
\newtheorem{cor}[thm]{\protect\corollaryname}
\theoremstyle{definition}
\newtheorem{example}[thm]{\protect\examplename}
\theoremstyle{definition}
\newtheorem{defn}[thm]{\protect\definitionname}
\theoremstyle{remark}
\newtheorem{rem}[thm]{\protect\remarkname}
\providecommand{\corollaryname}{Corollary}
\providecommand{\definitionname}{Definition}
\providecommand{\examplename}{Example}
\providecommand{\lemmaname}{Lemma}
\providecommand{\remarkname}{Remark}
\providecommand{\theoremname}{Theorem}
\begin{document}
\title[on simplicity in $\mathsf{Conv}$]{When is a category of adherence-determined convergences simple?}
\author{Frédéric Mynard and \noun{Jerzy Wojciechowski}}
\address{NJCU, Department of Mathematics, 2039 Kennedy Blvd, Jersey City, NJ
07305, USA}
\address{West Virginia University, Department of Mathematics, 94 Beechurst
Ave, Morgantown, WV 26506-6310, USA}
\email{fmynard@njcu.edu}
\email{jerzy@math.wvu.edu}

\maketitle
\global\long\def\G{\mathcal{G}}%
 
\global\long\def\F{\mathcal{F}}%
 
\global\long\def\H{\mathcal{H}}%
\global\long\def\Z{\mathcal{Z}}%
 
\global\long\def\L{\mathcal{L}}%
\global\long\def\U{\mathcal{U}}%
\global\long\def\W{\mathcal{W}}%
 
\global\long\def\E{\mathcal{E}}%
\global\long\def\B{\mathcal{B}}%
\global\long\def\M{\mathcal{M}}%
 
\global\long\def\A{\mathcal{A}}%
\global\long\def\D{\mathcal{D}}%
\global\long\def\O{\mathcal{O}}%
 
\global\long\def\N{\mathcal{N}}%
 
\global\long\def\X{\mathcal{X}}%
 
\global\long\def\lm{\lim\nolimits}%
 
\global\long\def\then{\Longrightarrow}%

\global\long\def\V{\mathcal{V}}%
\global\long\def\C{\operatorname{C}}%
\global\long\def\adh{\operatorname{adh}\nolimits}%
\global\long\def\Seq{\operatorname{Seq}\nolimits}%
\global\long\def\Seqt{\operatorname{Seqt}\nolimits}%
\global\long\def\intr{\operatorname{int}\nolimits}%
\global\long\def\cl{\operatorname{cl}\nolimits}%
\global\long\def\inh{\operatorname{inh}\nolimits}%
\global\long\def\diam{\operatorname{diam}\nolimits\ }%
\global\long\def\card{\operatorname{card}}%
\global\long\def\T{\operatorname{T}}%
\global\long\def\S{\operatorname{S}}%
\global\long\def\id{\operatorname{id}}%
\global\long\def\R{\operatorname{R}}%
\global\long\def\I{\operatorname{I}}%

\global\long\def\fix{\operatorname{fix}\nolimits}%
\global\long\def\Epi{\operatorname{Epi}\nolimits}%

\global\long\def\BD{\operatorname{B}_{\mathbb{D}}}%
\global\long\def\AdhD{\operatorname{A}_{\mathbb{D}}}%
\global\long\def\UD{\operatorname{U}_{\mathbb{D}}}%
\global\long\def\K{\operatorname{K}}%
\global\long\def\g{\gimel_{\mathbb{D}}}%

\global\long\def\frak#1{\mathfrak{#1}}%
\global\long\def\cal#1{\mathcal{#1}}%
\global\long\def\surr#1{{#1}}%
\global\long\def\t#1{\mathrm{#1}}%
\global\long\def\of#1{{\left(#1\right)}}%
\global\long\def\size#1{\left\Vert #1\right\Vert }%
\global\long\def\bof#1{{\left[#1\right]}}%
\global\long\def\lsup#1#2{{}{}^{#1}{#2}}%
\global\long\def\res{{\restriction\,}}%

\global\long\def\super{\supseteq}%
\global\long\def\subnq{\subsetneqq}%
\global\long\def\all{\forall}%
\global\long\def\exi{\exists}%
\global\long\def\sub{\subseteq}%
\global\long\def\empa{\varnothing}%
\global\long\def\sem{\smallsetminus}%
\global\long\def\ity{\infty}%
\global\long\def\emp{\varnothing}%

\global\long\def\bcup{\bigcup}%
\global\long\def\bcap{\bigcap}%
\global\long\def\and{\wedge}%
\global\long\def\orr{\vee}%
\global\long\def\then{\Rightarrow}%
\global\long\def\tm{\times}%
\global\long\def\isom{\cong}%
\global\long\def\ds{\dots}%
\global\long\def\conc{^{\frown}}%
\global\long\def\nin{\notin}%
\global\long\def\mto{\mapsto}%
\global\long\def\cont{\frak c}%
\global\long\def\sym{\bigtriangleup}%
\global\long\def\clb#1{\overline{#1}}%
\global\long\def\cl{\mathrm{cl}}%
\global\long\def\clm{\mathrm{cl}_{M}}%
\global\long\def\cln{\mathrm{cl}_{N}}%
\global\long\def\clof#1{\mathrm{cl}\left(#1\right)}%

\global\long\def\brp{\mathbb{R}^{+}}%
\global\long\def\br{\mathbb{R}}%
\global\long\def\bz{\mathbb{Z}}%
\global\long\def\bq{\mathbb{Q}}%
\global\long\def\bc{\mathbb{C}}%
\global\long\def\bn{\mathbb{N}}%
\global\long\def\bg{\mathbb{G}}%
\global\long\def\ba{\mathbb{A}}%
\global\long\def\bd{\mathbb{D}}%
\global\long\def\bp{\mathbb{P}}%
\global\long\def\bu{\mathbb{U}}%
\global\long\def\bl{\mathbb{L}}%
\global\long\def\bh{\mathbb{H}}%

\global\long\def\gw{\omega}%
\global\long\def\ga{\alpha}%
\global\long\def\gb{\beta}%
\global\long\def\gd{\delta}%
\global\long\def\ph{\varphi}%
\global\long\def\gga{\gamma}%
\global\long\def\gt{\theta}%
\global\long\def\gz{\zeta}%
\global\long\def\gk{\kappa}%
\global\long\def\gs{\sigma}%
\global\long\def\gl{\lambda}%
\global\long\def\gx{\xi}%
\global\long\def\gp{\pi}%
\global\long\def\eps{\varepsilon}%
\global\long\def\th{\vartheta}%
\global\long\def\del{\Delta}%
\global\long\def\gam{\Gamma}%

\global\long\def\Ord{\text{Ord}}%
\global\long\def\dom{\text{dom}}%
\global\long\def\cof{\text{cf}}%
\global\long\def\ran{\text{ran}}%
\global\long\def\im{\text{im}}%
\global\long\def\ord{\text{On}}%

\global\long\def\abs{\left|\,\right|}%
\global\long\def\mr{\diagdown}%
\global\long\def\mc{\diagup}%
\global\long\def\sl{\cal L}%
\global\long\def\sc{\cal C}%

\global\long\def\Bs{\mathbf{\Sigma}}%
\global\long\def\Bp{\mathbf{\Pi}}%

\global\long\def\cc{\Gamma}%
\global\long\def\dd{}%
\global\long\def\rs#1{{\restriction_{#1}}}%

\global\long\def\abv#1{\left|#1\right|}%
\global\long\def\set#1{\left\{  #1\right\}  }%
\global\long\def\cur#1{\left(#1\right)}%
\global\long\def\cei#1{\left\lceil #1\right\rceil }%
\global\long\def\ang#1{\left\langle #1\right\rangle }%
 
\global\long\def\bra#1{\left[#1\right]}%

\global\long\def\se{\mathscr{E}}%
\global\long\def\sc{\mathscr{C}}%
\global\long\def\sp{\mathscr{P}}%
\global\long\def\sb{\mathscr{B}}%
\global\long\def\sa{\mathscr{A}}%
\global\long\def\ss{\mathscr{S}}%
\global\long\def\sf{\mathscr{F}}%
\global\long\def\sx{\mathscr{X}}%
\global\long\def\sh{\mathscr{H}}%
\global\long\def\su{\mathscr{U}}%
\global\long\def\so{\mathscr{O}}%
\global\long\def\sy{\mathscr{Y}}%
\global\long\def\sm{\mathscr{M}}%
\global\long\def\sn{\mathscr{N}}%
\global\long\def\sd{\mathscr{D}}%
\global\long\def\sl{\mathscr{L}}%
\global\long\def\sg{\mathscr{G}}%
\global\long\def\sk{\mathscr{K}}%
\global\long\def\sv{\mathscr{V}}%
\global\long\def\sw{\mathscr{W}}%
\global\long\def\st{\mathscr{T}}%
\global\long\def\si{\mathscr{I}}%
\global\long\def\sj{\mathscr{J}}%
\global\long\def\sr{\mathscr{R}}%

\global\long\def\df#1{\boldsymbol{#1}}%
\global\long\def\der#1{\overset{\centerdot}{#1}}%
\global\long\def\ocl#1{\left(#1\right]}%
\global\long\def\clo#1{\left[#1\right)}%
\global\long\def\iv#1{\text{Iv}\left(#1\right)}%
\global\long\def\cless#1{\text{cless}\left(#1\right)}%
\global\long\def\dens#1{\text{dens}\left(#1\right)}%
\global\long\def\fr#1#2{\frac{#1}{#2}}%
\global\long\def\mus{\mu^{*}}%
\global\long\def\srs{\sr^{*}}%
\global\long\def\rec#1{\frac{1}{#1}}%
\global\long\def\seqk#1{\left(#1\right)_{k\in\bn}}%
\global\long\def\seqi#1{\left(#1\right)_{i\in\bn}}%
\global\long\def\limk#1{\lim_{k\to\ity}#1_{k}}%
\global\long\def\back{\Leftarrow}%

\global\long\def\lisk#1{k=1,2,\dots,#1}%
\global\long\def\lisi#1{i=1,2,\dots,#1}%
\global\long\def\lisj#1{j=1,2,\dots,#1}%

\global\long\def\cupk#1{\bigcup_{k=1}^{#1}}%
\global\long\def\cupi#1{\bigcup_{i=1}^{#1}}%
\global\long\def\cupj#1{\bigcup_{j=1}^{#1}}%

\global\long\def\capk#1{\bigcap_{k=1}^{#1}}%
\global\long\def\capi#1{\bigcap_{i=1}^{#1}}%
\global\long\def\capj#1{\bigcap_{j=1}^{#1}}%

\global\long\def\seqk#1{\left(#1\right)_{k\in\bn}}%
\global\long\def\seqi#1{\left(#1\right)_{i\in\bn}}%
\global\long\def\seqj#1{\left(#1\right)_{j\in\bn}}%

\global\long\def\limk{\lim_{k\to\ity}}%
\global\long\def\limi{\lim_{i\to\ity}}%
\global\long\def\limj{\lim_{j\to\ity}}%

\global\long\def\sumi#1{\sum_{i=1}^{#1}}%
\global\long\def\sumk#1{\sum_{k=1}^{#1}}%
\global\long\def\sumj#1{\sum_{j=1}^{#1}}%

\global\long\def\qand{\qquad\text{and}\qquad}%
\global\long\def\seg{\left[0,1\right]}%
\global\long\def\bzp{\mathbb{Z}^{+}}%
\global\long\def\ovl#1{\overline{#1}}%
\global\long\def\ome{\Omega}%

\global\long\def\sfl{\mathsf{L}}%
\global\long\def\dof#1{\text{dist}\left(#1\right)}%
\global\long\def\pt{\partial}%
\global\long\def\od{\odot}%
\global\long\def\om{\ominus}%
\global\long\def\nm#1{\left\Vert #1\right\Vert }%

\global\long\def\part#1#2#3{\left\{  \left(#1{}_{1},#2{}_{1}\right),\ds,\left(#1_{#3},#2_{#3}\right)\right\}  }%
\global\long\def\parta#1{\left\{  \left(A_{1},x{}_{1}\right),\ds,\left(A_{#1},x_{#1}\right)\right\}  }%
\global\long\def\partb#1{\left\{  \left(B_{1},y{}_{1}\right),\ds,\left(B_{#1},y_{#1}\right)\right\}  }%

\global\long\def\ac{\text{AC}}%
\global\long\def\gas#1{\alpha^{*}\of{#1}}%
\global\long\def\scivp{\text{SCIVP}}%
\global\long\def\es{\text{ES}}%
\global\long\def\szy{\text{SZ}}%
\global\long\def\conn{\text{Conn}}%
\global\long\def\cn{\mbox{Cn}}%
\global\long\def\szl{\text{SZ}\cur{\mathbb{L}}}%
\global\long\def\won{\wonwon}%
\global\long\def\bb#1{\mathbb{#1}}%
\global\long\def\up#1{#1^{\uparrow}}%
\global\long\def\mesh{\mmesh}%
\global\long\def\adh{\text{adh}}%

\global\long\def\nsub{\nsubseteq}%

\global\long\def\won{\gimel}%

\global\long\def\gip{\gimel_{\Phi}}%
\global\long\def\yp{Y_{\Phi}}%

\begin{abstract}
We provide a characterization of classes of filters $\mathbb{D}$
for which the full subcategory $\fix\AdhD$ of $\mathsf{Conv}$ formed
by convergences determined by the adherence of filters of the class
$\mathbb{D}$ is simple in $\mathsf{Conv}$. Along the way, we also
elucidate when two classes of filters result in the same category
of adherence-determined convergences. As an application of the main
result, we show that the category of hypotopologies is not simple,
thus answering a question from \cite{Woj-Three}.
\end{abstract}

\section{Introduction}

It is well known that a topological space $X$ is Tychonoff (completely
regular and Hausdorff) if and only if $X$ can be embedded into a
product $\mathbb{R}^{I}$ of some copies of the real line $\mathbb{R}$.
Using the concept of reflectivity in category theory, we say that
the Tychonoff spaces form the epireflective hull of $\mathbb{R}$
or that the class of Tychonoff spaces contains an initially dense
member $\mathbb{R}$. We also say that the subcategory of Tychonoff
spaces is \emph{simple} in the category of all topological spaces.

In this paper, we will consider fundamental subcategories of the category
$\mathsf{Conv}$ of convergences and continuous functions. For a systematic
study of convergence spaces, see \cite{Dol-init,Dol-Royal,DoMy-found}.
A convergence is a relation between points of a set $X$ and the filters
on $X$. Each topological space $X$ induces a convergence on the
set $X$ by relating a point $x$ to all filters that converge to
$x$ (all filters that refine the neighborhood filter at $x$). Convergence
theory studies this relation in greater generality and considers the
topological convergence only as a special case. The need to study
non-topological convergences was pointed out by Gustave Choquet in
his fundamental paper \cite{Choq-conv}. It turns out that considering
topological problems in the larger setting of convergence spaces is
often illuminating, in a way that can be compared to using complex
numbers to solve a problem formulated in the reals.

The special subclasses of convergence spaces formed by pretopologies
and by pseudotopologies have long been recognized as fundamental:
they were already introduced by Choquet in the pioneering paper \cite{Choq-conv}
and the category $\mathsf{PreTop}$ of pretopological spaces and $\mathsf{PsTop}$
of pseudotopological spaces have special structural roles with respect
to $\mathsf{Top}$: in categorical terms, $\mathsf{PreTop}$ is the
extensional hull of $\mathsf{Top}$ while $\mathsf{PsTop}$ is the
quasitopos hull of $\mathsf{Top}$ (See, e.g., \cite{HerrLCSchwarz}
for details). These two categories turn out to fit in a larger useful
classification introduced by Szymon Dolecki: \emph{adherence-determined
convergences} \cite{Dol-quotient}. Beyond its categorical unifying
power, the notion turns out to be the key to convergence-theoretic
characterizations of various types of quotient maps (e.g., biquotient,
countably biquotient, hereditarily quotient) and of various topological
notions (e.g., bisequential, countably bisequential, Fréchet-Urysohn,
etc) in terms of functorial inequalities, which in turn allows to
unify, generalize, and refine a large spectrum of topological results
on preservation under maps \cite{Dol-quotient}, under products (e.g.,
\cite{DoMy-mech}), on compactness and its variants e.g, \cite{Dol-compact,Dol-erratum,Dol-init,myn-compact},
etc. The categories $\mathsf{ParaTop}$ and $\mathsf{HypoTop}$, of
paratopologies and of hypotopologies respectively, appear naturally
within this classification and fill important gaps left by the traditional
categorical approach in the quest to interpret classical topological
notions categorically. 

We are going to investigate the question of simplicity of subcategories
of $\mathsf{Conv}$. In particular, the category $\mathsf{Top}$ of
topological spaces is simple in $\mathsf{Conv}$. Several subcategories
of $\mathsf{Top}$ are also simple in $\mathsf{Conv}$, see for example
\cite{HaMy-On_non-symp,HaMy-The_non_symp,Herr-TopRefCoref,Herr-CatTop}.
Simplicity also holds when we enlarge $\mathsf{Top}$ to the adherence-determined
subcategories $\mathsf{PreTop}$ and $\mathsf{ParaTop}$ (see Antoine
\cite{Ant-etude} and Bourdaud \cite{Bour-espaces,Bour-cart} for
$\mathsf{PreTop}$ and \cite{Woj-Three} for $\mathsf{ParaTop}$).
However, when we enlarge $\mathsf{PreTop}$ to $\mathsf{PsTop}$ or
to the whole $\mathsf{Conv}$, then Eva and Robert Lowen showed \cite{LoLo-nonsimp}
that simplicity fails. The category $\mathsf{HypoTop}$  was shown
in \cite{Woj-Three} not to be simple, under the assumption that measurable
cardinals form a proper class. 

In this paper, we give a complete characterization of when one of
the fundamental categories of adherence-determined convergences is
simple. Namely, we give a condition on a class $\mathbb{D}$ of filters
that characterizes the simplicity of the full subcategory $\fix\AdhD$
of $\mathsf{Conv}$ formed by convergences determined by the adherence
of filters of the class $\mathbb{D}$. When $\mathbb{D}$ is respectively
the class of principal filters, of countably-based filters, of countably
complete filters, and of all filters then $\fix\AdhD$ is respectively
$\mathsf{PreTop}$, $\mathsf{ParaTop}$, $\mathsf{HypoTop}$ and $\mathsf{PsTop}$.
As a result, we recover results of \cite{Ant-etude,Bour-cart,Bour-espaces}
on the simplicity of $\mathsf{PreTop}$, of \cite{Woj-Three} on the
simplicity of $\mathsf{ParaTop}$ and of \cite{LoLo-nonsimp} on the
non-simplicity of $\mathsf{PsTop}$. Moreover, we answer the question
raised in \cite{Woj-Three} and prove in ZFC that $\mathsf{HypoTop}$
(and more generally the category of $\mu$-hypotopologies where $\mu$
is an infinite cardinal) is not a simple subcategory of $\mathsf{Conv}$
(Corollary \ref{Cor-hypo}). To summarize\medskip{}

\begin{center}
\begin{tabular}{|c|c|c|c|}
\hline 
class $\mathbb{D}$ of filters  & $\mathbb{D}$-adherence-determined conv. & category  & simple\tabularnewline
\hline 
\hline 
$\mathbb{F}$ of all & pseudotopology & $\mathsf{PsTop}$ & X\tabularnewline
\hline 
$\mathbb{F}_{1}$ of countably based & paratopology & $\mathsf{ParaTop}$ & $\checkmark$\tabularnewline
\hline 
$\mathbb{F}_{\wedge1}$ of countably complete & hypotopology & $\mathsf{HypoTop}$ & X\tabularnewline
\hline 
$\mathbb{F}_{0}$ of principal & pretopology & $\mathsf{PreTop}$ & $\checkmark$\tabularnewline
\hline 
\end{tabular}
\par\end{center}

\section{Basic definitions}

\subsection{Categorical terminology}

Let us first introduce the general concept of a simple subcategory
in general and in particular for topological categories. For more
details, we refer the reader to \cite{AdHeSt-categ,HeSt-categ,LoSiVe-CaTo}. 

Let $\cal A$ be a full and isomorphism-closed subcategory of a category
$\cal B$ with the embedding functor $E:\cal A\to\cal B$. Given an
object $B$ of $\cal B$, a pair $\cur{u,A}$, (where $A$ is an object
of $\cal A$ and $u:B\to E\of A$ is a morphism of $\cal B$) is called
an $\cal A$-\emph{reflection} of $B$ provided that for each object
$A'$ of $\cal A$ and each morphism $f:B\to E\of{A'}$ there exists
a unique morphism $g:A\to A'$ such that $f=E\of g\circ u$. The subcategory
$\cal A$ is \emph{epireflective} in $\cal B$ provided that for every
object $B$ of $\cal B$ there exists an $\cal A$-reflection $\cur{u,A}$
of $B$ with $u$ being an epimorphism of $\cal B$. We say that $\cal A$
is \emph{simple} in $\cal B$ provided that $\cal A$ is epireflective
in $\cal B$ and there exists an object $A$ of $\cal A$ such that
every epireflective (full and isomorphism-closed) subcategory of $\cal B$
containing $A$ must contain $\cal A$. We say then that $\cal A$
is the \emph{epireflective hull} of $A$ in $\cal B$.

For example, the epireflective hull of the closed interval $\bra{0,1}$
in the category of Hausdorff spaces (and continuous functions), is
the subcategory of compact Hausdorff spaces, however the epireflective
hull of the same interval in the category of all topological spaces
(and continuous functions) is the subcategory of Tychonoff spaces
(completely regular and Hausdorff). This difference is caused by the
fact that in the category of Hausdorff spaces a morphism is an epimorphism
if and only if its image is dense, while in the category of all topological
spaces being an epimorphism is equivalent to being surjective.

Let $\cal B$ be a concrete category over the category $\mathsf{Sets}$
of sets and functions, with the forgetful functor $U:\cal B\to\mathsf{Sets}$.
A class indexed family $\cur{f_{i}:B\to B_{i}}_{i\in I}$ of morphisms
of $\cal B$ is an \emph{initial source} provided that if $B'$ is
an object of $\cal B$ and $f:U\of{B'}\to U\of B$ is a function such
that $f_{i}\circ f:B'\to B_{i}$ is a morphism of $\cal B$, then
$f$ is also a morphism of $\cal B$. We say that $\cal B$ is \emph{topological}
provided that each structured source (a class indexed family $\cur{f_{i}:X\to U\of{B_{i}}}_{i\in I}$
of functions) has a unique initial lift, that is, there exists a unique
object $B$ of $\cal B$ with $U\of B=X$ such that $f_{i}:B\to B_{i}$
is a morphism of $\cal B$ for each $i\in I$ and $\cur{f_{i}:B\to B_{i}}_{i\in I}$
is an initial source in $\cal B$.

For example, the category of all topological spaces is topological,
but the category of Hausdorff spaces is not (\footnote{To see that, note that the empty structured source on a set $X$ with
at least $2$ elements has no initial lift. Otherwise, there would
be a topology on $X$ such that for any Hausdorff space $Y$ any function
$f:Y\to X$ is continuous. Such topology would have to be antidiscrete,
hence not Hausdorff.}). 

Assume that $\cal B$ is a topological category. In such a category
epimorphisms are exactly those morphisms that are surjective functions
and a (full and isomorphism-closed) subcategory $\cal A$ is epireflective
in $\cal B$ if and only if $\cal A$ is closed under the formation
of products and extremal subobjects ($\cur{f,A'}$ is an extremal
subobject of $A$ iff $f:A'\to A$ is an embedding). Moreover, for
any object $A$ of $\cal B$, there exists the epireflective hull
of $A$ in $\cal B$ obtained by taking all extremal subobjects of
the powers of $A$. An explicit condition for $\cal A$ to be simple
in $\cal B$ is that there exists an object $A_{0}$ of $\cal A$
such that for any object $A$ of $\cal A$ there exists an initial
source $\cur{f_{i}:A\to A_{0}}_{i\in I}$ in $\cal A$.

In this paper we will be concerned with simplicity of some (full and
isomorphism-closed) subcategories of the category $\mathsf{Conv}$
of convergences (and continuous functions). 

\subsection{Convergence spaces}

The context of this paper is that of the category $\mathsf{Conv}$
of convergence spaces and continuous maps. We use the terminology
and notations of \cite{DoMy-found}. In particular, a \emph{convergence
}$\xi$ on a set $X$ is a relation between points of $X$ and filters
on $X$, denoted 
\[
x\in\lm_{\xi}\F
\]
 whenever $x$ and $\F$ are $\xi$-related, subjected to two simple
axioms: $x\in\lim_{\xi}\{x\}^{\uparrow}$ for every $x\in X$, where
$\{x\}^{\uparrow}$ denotes the principal ultrafilter including $\{x\}$,
and $\lim_{\xi}\F\subset\lim_{\xi}\G$ whenever $\G$ is a filter
finer than the filter $\F$. If $(X,\xi)$ and $(Y,\tau)$ are two
convergence spaces, a map $f:X\to Y$ is \emph{continuous }(from $\xi$
to $\tau$), in symbols $f\in C(\xi,\tau)$, if 
\[
f(\lm_{\xi}\F)\subset\lm_{\tau}f[\F],
\]
where $f[\F]=\{B\subset Y:f^{-}(B)\in\F\}$ is the image filter. Of
course, every topology $\tau$ can be seen as a convergence given
by $x\in\lim_{\tau}\F$ if and only if $\F\geq\N_{\tau}(x)$, where
$\N_{\tau}(x)$ denotes the neighborhood filter of $x$ in the topology
$\tau$. This turns the category $\mathsf{Top}$ of topological spaces
and continuous maps into a full subcategory of $\mathsf{Conv}$.

We denote by $|\cdot|:\mathsf{Conv}\to\mathsf{Sets}$ the forgetful
functor, so that $|\xi|$ denotes the underlying set of a convergence
$\xi$ and $|f|$ is the underlying function of a morphism, though
we will normally not distinguish notationally the morphism and the
underlying function and denote them both by $f$. If $|\xi|=|\tau|$,
we say that $\xi$ is \emph{finer than $\tau$ }or that $\tau$ is
\emph{coarser than }$\xi$, in symbols, $\xi\geq\tau$, if the identity
map of their underlying set belongs to $C(\xi,\tau)$. This order
turns the set of convergences on a given set into a complete lattice
whose greatest element is the discrete topology, whose least element
is the antidiscrete topology, and whose suprema and infima are given
by
\begin{equation}
\lm_{\bigvee_{\xi\in\Xi}\xi}\F=\bigcap_{\xi\in\Xi}\lm_{\xi}\F\text{ and }\lm_{\bigwedge_{\xi\in\Xi}\xi}\F=\bigcup_{\xi\in\Xi}\lm_{\xi}\F.\label{eq:latticeconv}
\end{equation}

A point $x$ of a convergence space $(X,\xi)$ is \emph{isolated }if
$\{x\}^{\uparrow}$ is the only filter converging to $x$ in $\xi$.
A convergence is called \emph{prime }if it has at most one non-isolated
point.

$\mathsf{Conv}$ is a concrete topological category; in particular,
for every $f:X\to|\tau|$, there is the coarsest convergence on $X$,
called the \emph{initial convergence }for\emph{ $(f,\tau)$ }and denoted
$f^{-}\tau$, making $f$ continuous (to $\tau$), and for every $f:|\xi|\to Y$,
there is the finest convergence on $Y$, called the \emph{final convergence
for }$(f,\xi)$ and denoted $f\xi$, making $f$ continuous (from
$\xi$). Note that with these notations
\begin{equation}
f\in C(\xi,\tau)\iff\xi\geq f^{-}\tau\iff f\xi\geq\tau.\label{eq:continuity}
\end{equation}

Moreover, the initial lift on $X$ of a structured source $(f_{i}:X\to|\tau_{i}|)_{i\in I}$
turns out to be $\bigvee_{i\in I}f_{i}^{-}\tau_{i}$ and the final
lift on $Y$ of a structured sink $(f_{i}:|\xi_{i}|\to Y)_{i\in I}$
turns out to be $\bigwedge_{i\in I}f_{i}\xi_{i}$.

Products, subspaces, coproducts (sums) and quotients are then defined
as usual via initial and final structures.

Let $\Phi$ be a class of convergences. We say that a convergence
$\eta$ is \emph{initially dense} in $\Phi$ if and only if for each
$\xi\in\Phi$ there exists a set $A$ of functions from $|\xi|$ to
$|\eta|$ such that $\xi=\bigvee_{f\in A}f^{-}\eta$. Note that if
$\eta$ is initially dense in $\Phi$, then $\eta\in\Phi$ and $\xi=\bigvee_{f\in C(\xi,\eta)}f^{-}\eta$
for every $\xi\in\Phi$ (\footnote{If $\xi=\bigvee_{f\in A}f^{-}\eta$ then in particular $\xi\geq f^{-}\eta$
for every $f\in A$ so that, inview of (\ref{eq:continuity}), $A\subset C(\xi,\eta$).
Since $\xi\geq\bigvee_{f\in C(\xi,\eta)}f^{-}\eta\geq\bigvee_{f\in A}f^{-}\eta$
is always true, $\xi=\bigvee_{f\in A}f^{-}\eta$ for some $A$ if
and only if $\xi=\bigvee_{f\in C(\xi,\eta)}f^{-}\eta$.}). Note that if $\Phi$ is the class of objects of some full and isomorphism
closed subcategory $\cal A$ of $\mathsf{Conv}$, then $\eta$ is
initially dense in $\Phi$ if and only if $\cal A$ is the epireflective
hull of $\eta$ in $\mathsf{Conv}$. A class of convergences is \emph{simple}
provided it includes an initially dense convergence.

\subsection{Filters and classes of filters}

If $\mathcal{P}(X)$ denotes the powerset of $X$ and $\A\subset\mathcal{P}(X)$
then we write
\begin{eqnarray*}
\A^{\uparrow_{X}}=\A^{\uparrow} & := & \{B\subset X:\exists A\in\A,A\subset B\}\\
\A^{\#} & := & \{H\subset X:\forall A\in\A,H\cap A\neq\emptyset\}.
\end{eqnarray*}
The set $\mathbb{F}X$ of filters on $X$ is ordered by inclusion.
The infimum of a family $\mathbb{D}\subset\mathbb{F}X$ of filters
always exists and is $\bigcap_{\D\in\mathbb{D}}\D$. On the other
hand, the supremum of a pair of filters may fail to exist in $\mathbb{F}X$.
We say that two families $\A$ and $\B$ of subsets of $X$ \emph{mesh,
}in symbols\emph{ $\A\#\B$, }if $\A\subset\B^{\#}$, equivalently,
$\B\subset\A^{\#}$. Given $\F,\G\in\mathbb{F}X$ the supremum $\F\vee\G$
of the two filters exist (in $\mathbb{F}X$) if and only if $\F\#\G$.

Recall that the powerset $\mathcal{P}(X)=\{\emptyset\}^{\uparrow_{X}}$
is the \emph{degenerate filter }on $X$ and we denote by $\overline{\mathbb{F}X}$
the set of all (degenerate or proper) filters on $X$. Then inclusion
turns $\overline{\mathbb{F}X}$ into a complete lattice in which $\F\vee\G=\mathcal{P}(X)$
whenever $\F$ and $\G$ do not mesh. Note that, denoting by $\mathsf{Rel}$
the category of sets with relations as morphisms, $\overline{\mathbb{F}}:\mathsf{Rel}\to\mathsf{Rel}$
is a functor that associates with a set $X\in\mathsf{Ob(Rel)}$ the
set $\overline{\mathbb{F}X}$ and with a relation $R\subset X\times Y$
the relation $\overline{\mathbb{F}}R:\overline{\mathbb{F}X}\to\overline{\mathbb{F}Y}$
defined by 
\[
(\overline{\mathbb{F}}R)(\F)=R[\F]=\left\{ R(F):F\in\F\right\} ^{\uparrow_{Y}}.
\]
 We will denote by $\mathbb{D}\subset\overline{\mathbb{F}}$ the fact
that $\mathbb{D}$ is a subfunctor, that is, $\mathbb{D}X\subset\overline{\mathbb{F}X}$
for every set $X$ and $\overline{\mathbb{F}}R(\D)\in\mathbb{D}Y$
for every $\D\in\mathbb{D}X$ and every relation $R\subset X\times Y$.
In the terminology of \cite{JoMy-Compat,DoMy-found}, we say that
$\mathbb{D}$ is an $\mathbb{F}_{0}$-\emph{composable class of filters.
}Such a class must contain all principal filters, in particular every
principal ultrafilter. Moreover, for such a class, if $\D,\L\in\mathbb{D}X$
with $\D\#\L$ then $\D\vee\L\in\mathbb{D}X$, and if $\D\in\mathbb{D}X$
and $X\subset Y$, then $\D^{\uparrow_{Y}}\in\mathbb{D}Y$ (See e.g.,
\cite{JoMy-Compat},\cite[Lemma XIV.3.7]{DoMy-found} for this and
other properties of $\mathbb{F}_{0}$-composable classes). Among such
classes, we distinguish the class $\mathbb{F}_{0}$ of principal filters,
$\mathbb{F}_{1}$ of countably based filters and more generally $\mathbb{F}_{\kappa}$
of filters with a filter-base of cardinality less than $\aleph_{\kappa}$,
$\mathbb{F}_{\wedge\kappa}$ of $\aleph_{\kappa}$-complete filters.
In contrast, the class $\mathbb{U}$ of ultrafilters and the class
$\mathbb{E}$ of filters generated by a sequence are not $\mathbb{F}_{0}$-composable.

Given $\F\in\mathbb{F}X$ and $\mathbb{D}$ a class of filters, we
write 
\[
\mathbb{D}(\F):=\{\D\in\mathbb{D}X:\D\geq\F\}.
\]

Accordingly, $\mathbb{U}(\F)\neq\emptyset$ for every filter $\F$
and $\F=\bigcap_{\U\in\mathbb{U}(\F)}\U$ while $\F^{\#}=\bigcup_{\U\in\mathbb{U}(\F)}\U$.

Let $\xi$ be a convergence on a set $X$ and $\cal H$ be a filter
on $X$. We say that $\cal H$ \emph{adheres} to $x\in X$ (and write
$x\in\adh_{\xi}\cal H$) if there exists a filter $\cal G$ that refines
$\cal H$ with $x\in\lim_{\xi}\cal G$. In other words,
\begin{equation}
\adh_{\xi}\H:=\bigcup_{\mathbb{F}X\ni\G\geq\H}\lm_{\xi}\G=\bigcup_{\mathbb{F}X\ni\F\#\H}\lm_{\xi}\F=\bigcup_{\U\in\mathbb{U}(\H)}\lm_{\xi}\U.\label{eq:defadh}
\end{equation}

Let $\mathbb{D}$ be a class of filters. A convergence $\xi$ is $\mathbb{D}$-\emph{adherence-determined}
if $x\in\lim_{\xi}\cal F$ whenever $x\in\t{adh}_{\xi}\D$ for each
filter $\D\in\mathbb{D}$ such that $\D$ is a filter on $\abv{\xi}$
and $\D\#\cal F$.

If $\mathbb{D}$ is an $\mathbb{F}_{0}$-composable class of filters,
then $\AdhD$ defined on objects by
\[
\lm_{\AdhD\xi}\F=\bigcap_{\mathbb{D}\ni\D\#\F}\adh_{\xi}\D
\]
 is a concrete reflector and $\fix\AdhD=\{\xi\in\mathsf{Ob(Conv)}:\xi=\AdhD\xi\}$
is the subcategory of $\mathbb{D}$-adherence-determined convergences. 

\bigskip{}

\begin{center}
\begin{tabular}{|c|c|c|c|}
\hline 
$\mathbb{D}$ & $\AdhD$ & $\xi=\AdhD\xi$ is a & category $\fix\AdhD$\tabularnewline
\hline 
\hline 
$\mathbb{F}$ & $\S$ & pseudotopology & $\mathsf{PsTop}$\tabularnewline
\hline 
$\mathbb{F}_{1}$ & $\S_{1}$ & paratopology & $\mathsf{ParaTop}$\tabularnewline
\hline 
$\mathbb{F}_{\wedge1}$ & $\S_{\wedge1}$ & hypotopology & $\mathsf{HypoTop}$\tabularnewline
\hline 
$\mathbb{F}_{0}$ & $\S_{0}$ & pretopology & $\mathsf{PreTop}$\tabularnewline
\hline 
\end{tabular}
\par\end{center}

\section{Main results}

\global\long\def\AdhH{\operatorname{A}_{\mathbb{H}}}%

\subsection{For what classes $\mathbb{D}$ and $\mathbb{H}$ do we have $\protect\AdhD=\protect\AdhH$?(\protect\footnote{We would like to thank Emilio Angulo-Perkins, Fadoua Chigr, and Jesús
González Sandoval for helpful discussions around the results in this
subsection.})}

Given a class $\mathbb{D}$, define for each set $X$
\[
\widehat{\mathbb{D}}X:=\left\{ \H\in\mathbb{F}X:\forall\U\in\mathbb{U}(\H)\exists\D\in\mathbb{D}X:\H\leq\D\leq\U\right\} 
\]
thus defining a new class $\widehat{\mathbb{D}}$ (\footnote{Note that 
\[
\widehat{\mathbb{D}}X=\left\{ \H\in\mathbb{F}X:\forall\F\in\mathbb{F}X\left(\F\#\H\then\exists\D\in\mathbb{D}(\H),\D\#\F\right)\right\} .
\]
}). By definition $\mathbb{D}\subset\widehat{\mathbb{D}}$, and 
\begin{equation}
\mathbb{D}\subset\mathbb{J}\then\widehat{\mathbb{D}}\subset\widehat{\mathbb{J}}.\label{eq:hatincreasing}
\end{equation}
Moreover, 
\begin{lem}
\label{lem:hatidempotent} Given a class $\mathbb{D}$ of filters,
$\widehat{\mathbb{D}}=\widehat{\widehat{\mathbb{D}}}$. 
\end{lem}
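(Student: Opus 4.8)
The plan is to prove the two inclusions separately. The inclusion $\widehat{\mathbb{D}}\subset\widehat{\widehat{\mathbb{D}}}$ is immediate and purely formal: since $\mathbb{D}\subset\widehat{\mathbb{D}}$ holds for every class (as noted just before the statement), applying the monotonicity property (\ref{eq:hatincreasing}) with $\mathbb{J}=\widehat{\mathbb{D}}$ gives $\widehat{\mathbb{D}}\subset\widehat{\widehat{\mathbb{D}}}$. So all the content lies in the reverse inclusion $\widehat{\widehat{\mathbb{D}}}\subset\widehat{\mathbb{D}}$.

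For the reverse inclusion, I would argue directly from the defining condition, exploiting that it is an ``interpolation between $\H$ and each ultrafilter refining it,'' a property that telescopes through an intermediate filter. Fix a set $X$, take $\H\in\widehat{\widehat{\mathbb{D}}}X$, and fix an arbitrary $\U\in\mathbb{U}(\H)$; the goal is to produce $\D\in\mathbb{D}X$ with $\H\leq\D\leq\U$. By membership of $\H$ in $\widehat{\widehat{\mathbb{D}}}X$ applied to this $\U$, there is an intermediate filter $\E\in\widehat{\mathbb{D}}X$ with $\H\leq\E\leq\U$.

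The key step is then to feed the \emph{same} ultrafilter $\U$ into the defining condition for $\E$. Since $\E\leq\U$ and $\U$ is an ultrafilter, we have $\U\in\mathbb{U}(\E)$; hence, because $\E\in\widehat{\mathbb{D}}X$, there exists $\D\in\mathbb{D}X$ with $\E\leq\D\leq\U$. Transitivity of $\leq$ now yields $\H\leq\E\leq\D\leq\U$, so in particular $\H\leq\D\leq\U$ with $\D\in\mathbb{D}X$. As $\U\in\mathbb{U}(\H)$ was arbitrary, this shows $\H\in\widehat{\mathbb{D}}X$, completing the reverse inclusion.

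The argument is essentially frictionless; the only point requiring a moment's care --- and the closest thing to an obstacle --- is the observation that the ultrafilter $\U$ witnessing the condition for $\H$ is the very same ultrafilter one must supply to the condition for the intermediate filter $\E$, which is legitimate precisely because $\E\leq\U$ forces $\U\in\mathbb{U}(\E)$. Once this is noticed, idempotency follows from the transitivity of the refinement order.
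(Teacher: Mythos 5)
Your proof is correct and follows essentially the same route as the paper's: the easy inclusion via $\mathbb{D}\subset\widehat{\mathbb{D}}$ and the monotonicity property (\ref{eq:hatincreasing}), and the reverse inclusion by interpolating an intermediate filter $\E\in\widehat{\mathbb{D}}$ between $\H$ and a fixed $\U\in\mathbb{U}(\H)$, then reusing the same $\U$ (legitimate since $\E\leq\U$ gives $\U\in\mathbb{U}(\E)$) to interpolate $\D\in\mathbb{D}$. No gaps; nothing further to add.
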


\begin{proof}
As $\mathbb{D}\subset\widehat{\mathbb{D}}$, (\ref{eq:hatincreasing})
implies that $\widehat{\mathbb{D}}\subset\widehat{\widehat{\mathbb{D}}}$.
If $\H\in\widehat{\widehat{\mathbb{D}}}$ then for every $\U\in\mathbb{U}(\H)$
there is $\F\in\widehat{\mathbb{D}}$ with $\H\leq\F\leq\U$. As $\F\in\widehat{\mathbb{D}}$
and $\U\in\mathbb{U}(\F)$ there is $\D\in\mathbb{D}$ with $\H\leq\F\leq\D\leq\U$
and thus $\H\in\widehat{\mathbb{D}}$.
\end{proof}
\begin{thm}
\label{thm:ADgeqAH} Given two classes of filters $\mathbb{D}$ and
$\mathbb{H}$,
\[
\AdhD\geq\AdhH\iff\mathbb{H}\subset\widehat{\mathbb{D}}.
\]
\end{thm}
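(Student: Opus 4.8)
The plan is to prove the two implications separately. Throughout I would use two elementary facts: that $\adh_\xi$ is antitone in its filter argument (if $\D\geq\E$ then $\adh_\xi\D\subseteq\adh_\xi\E$, since every filter refining $\D$ also refines $\E$), and that $\AdhD\geq\AdhH$ unfolds, via the order on convergences and the definition of $\AdhD$, to the pointwise statement that $\bigcap_{\mathbb{D}\ni\D\#\F}\adh_\xi\D\subseteq\bigcap_{\mathbb{H}\ni\E\#\F}\adh_\xi\E$ for every convergence $\xi$ and every filter $\F$ on $\abv\xi$.

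For the implication $\mathbb{H}\subset\widehat{\mathbb{D}}\then\AdhD\geq\AdhH$, I would fix $\xi$, a filter $\F$, and a point $x\in\lim_{\AdhD\xi}\F$, and show $x\in\adh_\xi\E$ for each $\E\in\mathbb{H}$ with $\E\#\F$. Since $\E\in\mathbb{H}\subset\widehat{\mathbb{D}}$, the reformulation of $\widehat{\mathbb{D}}$ in the footnote—applied to $\F$, which meshes with $\E$—produces $\D\in\mathbb{D}(\E)$ (so $\D\geq\E$) with $\D\#\F$. Then $x\in\adh_\xi\D$ because $x\in\lim_{\AdhD\xi}\F$ and $\D\in\mathbb{D}$ meshes with $\F$, and antitonicity gives $\adh_\xi\D\subseteq\adh_\xi\E$; hence $x\in\adh_\xi\E$. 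This direction needs no construction.

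For the converse I would argue by contraposition. Pick $\E\in\mathbb{H}X\setminus\widehat{\mathbb{D}}X$; by the footnote reformulation there is a filter $\F$ on $X$ with $\F\#\E$ such that no $\D\in\mathbb{D}(\E)$ meshes with $\F$. Adjoin a fresh point $p\notin X$, set $Y=X\cup\{p\}$, and build the \emph{prime} convergence $\xi$ on $Y$ in which every point of $X$ is isolated and $p\in\lim_\xi\G$ iff $\G\geq\U$ for some $\U$ in a set $\W$ of ultrafilters to be specified (with $\{p\}^{\uparrow}\in\W$ so the axioms hold). For such $\xi$ one checks that $p\in\adh_\xi\H$ iff $\W\cap\mathbb{U}(\H)\neq\emptyset$. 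The goal is to choose $\W$ so that, taking $\G=\F^{\uparrow_Y}$, we get $p\in\lim_{\AdhD\xi}\G$ but $p\notin\lim_{\AdhH\xi}\G$; since $\E^{\uparrow_Y}\in\mathbb{H}Y$ meshes with $\F^{\uparrow_Y}$, for the latter it suffices that no member of $\W$ refines $\E^{\uparrow_Y}$. Concretely, for each $\D\in\mathbb{D}Y$ meshing with $\F^{\uparrow_Y}$ I would place a chosen ultrafilter $\U_\D\geq\D$ into $\W$; this automatically yields $p\in\adh_\xi\D$ for every such $\D$, hence $p\in\lim_{\AdhD\xi}\F^{\uparrow_Y}$.

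The crux—and the main obstacle—is arranging each $\U_\D$ to \emph{not} refine $\E^{\uparrow_Y}$, even though $\D$ now ranges over the larger class $\mathbb{D}Y$ rather than $\mathbb{D}X$, and even though $\E$ itself meshes with $\F$. The key claim is that any $\D\in\mathbb{D}Y$ meshing with $\F^{\uparrow_Y}$ satisfies $\D\not\geq\E^{\uparrow_Y}$; granting this, some $E\in\E$ lies outside $\D$, so $Y\setminus E\in\D^{\#}$, and any ultrafilter $\U_\D\geq\D\vee(Y\setminus E)^{\uparrow_Y}$ contains $Y\setminus E$ and hence fails to refine $\E^{\uparrow_Y}$; together with $\{p\}^{\uparrow}\not\geq\E^{\uparrow_Y}$ (guaranteed by the freshness of $p$), no member of $\W$ refines $\E^{\uparrow_Y}$, so $p\notin\adh_\xi\E^{\uparrow_Y}$. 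To prove the claim I would pass to the trace $\D_X:=\{D\cap X:D\in\D\}^{\uparrow_X}$, which is a proper filter—meshing of $\D$ with $\F^{\uparrow_Y}\supseteq\F$, a family of subsets of $X$, forces $X\in\D^{\#}$—and which lies in $\mathbb{D}X$ because $\mathbb{D}$ is a subfunctor of $\overline{\mathbb{F}}$, being $\overline{\mathbb{F}}R(\D)$ for the partial identity relation $R\subset Y\times X$. A direct check gives $\D_X\#\F$ and, if $\D\geq\E^{\uparrow_Y}$, also $\D_X\geq\E$, exhibiting a member of $\mathbb{D}(\E)$ meshing with $\F$ and contradicting the choice of $\F$. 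This completes the counterexample. I expect this trace argument, together with the bookkeeping for moving filters between $X$ and $Y$, to be the only delicate part; the rest is routine manipulation of meshing and adherence.
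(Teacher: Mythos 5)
Your proof is correct, and its skeleton matches the paper's: the forward implication is verified pointwise exactly as the paper does it, and the converse is a contrapositive argument built around a prime convergence on a one-point extension of the set carrying a witness of $\mathbb{H}\not\subset\widehat{\mathbb{D}}$. The mechanics of your counterexample, however, are genuinely different. The paper uses the ultrafilter form of the failure (a $\U_{0}\in\mathbb{U}(\H_{0})$ admitting no $\D\in\mathbb{D}$ with $\H_{0}\leq\D\leq\U_{0}$), defines $\infty\in\lim_{\sigma}\G$ by the single coarse condition that $\G$ not mesh $\H_{0}$, and separates at $\U_{0}$, with the adherence witnesses being the principal filters $\{H^{c}\}^{\uparrow}$ extracted from $\H_{0}\nleq\D$; you instead use the general-filter (footnote) form of the failure and take the convergence to $p$ to be exactly a curated set $\W$ of ultrafilters, one $\U_{\D}\geq\D$ for each $\D\in\mathbb{D}Y$ meshing $\F^{\uparrow_{Y}}$, chosen to avoid $\E^{\uparrow_{Y}}$, so that $p\in\lim_{\AdhD\xi}\F^{\uparrow_{Y}}$ holds by fiat and all the work sits in your key claim, proved by the trace argument. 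The combinatorial core is the same fact in two guises: your ``$\D\#\F^{\uparrow_{Y}}$ forces $\E^{\uparrow_{Y}}\nleq\D$, hence $Y\setminus E\in\D^{\#}$ for some $E\in\E$'' is the paper's ``$\D\leq\U_{0}$ forces $\H_{0}\nleq\D$, hence $H^{c}\in\D^{\#}$ for some $H\in\H_{0}$''. What your route buys: the conclusion is more direct (you exhibit one filter on which $\lim_{\AdhD\xi}\not\subseteq\lim_{\AdhH\xi}$, never needing the fixed-point identity $\xi=\AdhH\xi$ that the paper verifies for $\sigma$), and it makes honest the underlying-set bookkeeping the paper elides: there too $\H_{0}$ lives on $X$ while the quantified test filters live on $X\cup\{\infty\}$, so the paper implicitly lifts $\H_{0}$ into $\mathbb{H}(X\cup\{\infty\})$ and must trace any $\D\in\mathbb{D}(X\cup\{\infty\})$ back to $X$, just as you do. What it costs: you explicitly invoke the subfunctor ($\mathbb{F}_{0}$-composability) property of $\mathbb{D}$ (for $\D_{X}=\overline{\mathbb{F}}R(\D)\in\mathbb{D}X$) and closure of $\mathbb{H}$ under lifts (for $\E^{\uparrow_{Y}}\in\mathbb{H}Y$), although the theorem is stated for bare classes; since the paper's own argument needs the same closure properties at the same junctures, this is a faithful reading of the section's standing hypotheses rather than a gap in your proof.
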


\begin{proof}
Assume that $\mathbb{H}\subset\widehat{\mathbb{D}}$ and let $x\in\lim_{\AdhD\xi}\F$
and $\H\in\mathbb{H}$ with $\H\#\F$. Because $\H\in\widehat{\mathbb{D}}$,
there is $\D\in\mathbb{D}(\H)$ with $\D\#\F$. As $\D\in\mathbb{D}$
and $\D\#\F$, $x\in\adh_{\xi}\D$. As $\D\geq\H$, $\adh_{\xi}\D\subset\adh_{\xi}\H$.
Hence $x\in\lim_{\AdhH\xi}\F$.

Assume conversely that $\mathbb{H}\not\subset\widehat{\mathbb{D}}$,
that is, there is $\H_{0}\in\mathbb{H}$ with $\H_{0}\notin\widehat{\mathbb{D}}$,
so that there is $\U_{0}\in\mathbb{U}(\H_{0})$ such that $\H_{0}\nleq\D$
whenever $\D\in\mathbb{D}$ and $\D\leq\U_{0}$. Consider the prime
convergence $\sigma$ on $X\cup\{\infty\}$ in which $\infty\in\lim_{\sigma}\F$
if and only if $\F$ and $\H_{0}$ do not mesh. Then $\sigma=\AdhH\sigma$
because $\H_{0}\in\mathbb{H}$, but $\AdhD\sigma\ngeq\sigma$. Indeed,
by definition $\infty\notin\lim_{\sigma}\U_{0}$ because $\H_{0}\#\U_{0}$
but $\infty\in\lim_{\AdhD\sigma}\U_{0}$. Indeed, if $\D\in\mathbb{D}$
and $\D\#\U_{0}$, equivalently, $\D\leq\U_{0}$ then $\H_{0}\nleq\D$,
that is, there is $H\in\H_{0}$ with $H^{c}\in\D^{\#}$. As $\{H^{c}\}^{\uparrow}$
does not mesh with $\H_{0}$, $\infty\in\lim_{\sigma}\{H^{c}\}^{\uparrow}$
and thus $\infty\in\adh_{\sigma}\D$, which completes the proof that
$\AdhD\sigma\ngeq\sigma$. As a result, $\AdhD\ngeq\AdhH$.
\end{proof}
\begin{cor}
Given two classes of filters $\mathbb{D}$ and $\mathbb{H}$,
\[
\AdhD=\AdhH\iff\widehat{\mathbb{H}}=\widehat{\mathbb{D}}.
\]
\end{cor}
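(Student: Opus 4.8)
The plan is to reduce the claimed equivalence to Theorem~\ref{thm:ADgeqAH} applied in both directions, and then to convert the resulting pair of containments into equality of the hat-closures using Lemma~\ref{lem:hatidempotent} together with the monotonicity~(\ref{eq:hatincreasing}).

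First I would unfold the equality of reflectors as two inequalities: $\AdhD=\AdhH$ holds if and only if $\AdhD\geq\AdhH$ and $\AdhH\geq\AdhD$. By Theorem~\ref{thm:ADgeqAH} the first inequality is equivalent to $\mathbb{H}\subset\widehat{\mathbb{D}}$, and the second (applying the theorem with the roles of $\mathbb{D}$ and $\mathbb{H}$ interchanged) to $\mathbb{D}\subset\widehat{\mathbb{H}}$. Hence $\AdhD=\AdhH$ is equivalent to the conjunction of $\mathbb{H}\subset\widehat{\mathbb{D}}$ and $\mathbb{D}\subset\widehat{\mathbb{H}}$, and it remains only to see that this conjunction is in turn equivalent to $\widehat{\mathbb{H}}=\widehat{\mathbb{D}}$.

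For the forward implication, from $\mathbb{H}\subset\widehat{\mathbb{D}}$ the monotonicity~(\ref{eq:hatincreasing}) gives $\widehat{\mathbb{H}}\subset\widehat{\widehat{\mathbb{D}}}$, which equals $\widehat{\mathbb{D}}$ by Lemma~\ref{lem:hatidempotent}; symmetrically, $\mathbb{D}\subset\widehat{\mathbb{H}}$ yields $\widehat{\mathbb{D}}\subset\widehat{\mathbb{H}}$, and the two containments combine to $\widehat{\mathbb{H}}=\widehat{\mathbb{D}}$. For the converse, the equality $\widehat{\mathbb{H}}=\widehat{\mathbb{D}}$ together with the inclusions $\mathbb{H}\subset\widehat{\mathbb{H}}$ and $\mathbb{D}\subset\widehat{\mathbb{D}}$, which hold by the very definition of the hat operation, immediately delivers $\mathbb{H}\subset\widehat{\mathbb{D}}$ and $\mathbb{D}\subset\widehat{\mathbb{H}}$.

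There is no genuine obstacle here: the corollary is a purely formal consequence of Theorem~\ref{thm:ADgeqAH} once one observes that the hat operation is a closure operator, that is, increasing by~(\ref{eq:hatincreasing}) and idempotent by Lemma~\ref{lem:hatidempotent}. The only points requiring attention are to apply Theorem~\ref{thm:ADgeqAH} with the two classes in swapped roles for the second inequality, and to invoke idempotency in exactly the right place so that $\widehat{\widehat{\mathbb{D}}}$ collapses to $\widehat{\mathbb{D}}$.
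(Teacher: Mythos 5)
Your proof is correct and follows essentially the same route as the paper: both directions reduce to Theorem~\ref{thm:ADgeqAH}, with the forward implication using monotonicity~(\ref{eq:hatincreasing}) and idempotency (Lemma~\ref{lem:hatidempotent}) to collapse $\widehat{\widehat{\mathbb{D}}}$ to $\widehat{\mathbb{D}}$, and the converse using the definitional inclusions $\mathbb{D}\subset\widehat{\mathbb{D}}$ and $\mathbb{H}\subset\widehat{\mathbb{H}}$. No gaps; your explicit remark that the hat operation is a closure operator is a fair summary of exactly what the paper's argument exploits.
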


\begin{proof}
Assume $\AdhD=\AdhH$. In view of Theorem \ref{thm:ADgeqAH}, $\mathbb{H}\subset\widehat{\mathbb{D}}$
and $\mathbb{D}\subset\widehat{\mathbb{H}}$. In view of (\ref{eq:hatincreasing})
and Lemma \ref{lem:hatidempotent}, $\widehat{\mathbb{H}}\subset\widehat{\widehat{\mathbb{D}}}=\widehat{\mathbb{D}}$
and $\widehat{\mathbb{D}}\subset\widehat{\widehat{\mathbb{H}}}=\widehat{\mathbb{H}}$
so that $\widehat{\mathbb{H}}=\widehat{\mathbb{D}}$. 

Conversely, if $\widehat{\mathbb{H}}=\widehat{\mathbb{D}}$ then $\mathbb{D}\subset\widehat{\mathbb{D}}=\widehat{\mathbb{H}}$
and $\mathbb{H}\subset\widehat{\mathbb{H}}=\widehat{\mathbb{D}}$
so that $\AdhD=\AdhH$ by Theorem \ref{thm:ADgeqAH}.
\end{proof}
\begin{example}
Of course, $\widehat{\mathbb{U}}=\mathbb{F}=\mathbb{\widehat{F}}$.
Moreover, $\widehat{\mathbb{F}_{0}}=\text{\ensuremath{\mathbb{F}_{0}}}$.
To see the latter, assume that $\H\notin\mathbb{F}_{0}$, so that
$(\ker\H)^{c}\in\H^{\#}$. Then there is $\U\in\mathbb{U}(\H\vee(\ker\H)^{c})$.
Note that any $\D\in\mathbb{F}_{0}(\H)$ is of the form $\{D\}^{\uparrow}$
for $D\subset\ker\H$, so that $D\notin\U$. Hence $\H\notin\widehat{\mathbb{F}_{0}}$.
\end{example}

\begin{example}
A topological (or convergence) space $(X,\xi)$ is called \emph{bisequential
}if for every ultrafilter $\U$ with $x\in\lim_{\xi}\U$ there is
$\D\in\mathbb{F}_{1}$ with $\D\leq\U$ and $x\in\lim_{\xi}\D$. In
other words, $\xi=\T\xi$ is bisequential if and only if each neighborhood
filter $\N_{\xi}(x)$ is in $\widehat{\mathbb{F}_{1}}$. Naturally,
we call filters of $\widehat{\mathbb{F}_{1}}$ \emph{bisequential
filters. }As there are bisequential topological spaces that are not
first-countable (\footnote{Take for instance the one-point compactification of a discrete set
$X$ of cardinality that is not measurable. See \cite[Example 10.15]{michael}
for details.}), $\mathbb{F}_{1}$ is a proper subclass of $\widehat{\mathbb{F}_{1}}$
but 
\[
\operatorname{A}_{\mathbb{F}_{1}}=\operatorname{A}_{\widehat{\mathbb{F}_{1}}}=\S_{1}.
\]
\end{example}

\subsection{For what class $\mathbb{D}$ is $\protect\fix\protect\AdhD$ simple?}
\begin{defn}
\label{def:refinable class} A class $\mathbb{D}$ of filters is called
\emph{refinable }if there is a set $Y$ such that for any set $X$
and every $\D\in\mathbb{D}X$ and $\F\in\mathbb{F}X$ with $\F\#\D$
there is $\L\in\mathbb{D}(\D)$ with $\L\#\F$ and there is $f:X\to Y$
with $\D\leq f^{-}[f[\L]]$.
\end{defn}

\begin{lem}
\label{lem:notmesh} If $\D\leq f^{-}[f[\L]]$ then 
\[
\H\neg\#\D\then f[\H]\neg\#f[\L].
\]
\end{lem}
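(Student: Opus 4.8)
The plan is to trace a pair of witnessing disjoint sets through the map $f$. First I would unpack the hypothesis $\H\neg\#\D$: by the definition of meshing it furnishes $H\in\H$ and $D\in\D$ with $H\cap D=\emp$. Next I would record the precise shape of the filter $f^{-}[f[\L]]$: a set $B\subseteq Y$ lies in $f[\L]$ exactly when $f^{-}(B)\in\L$, so pulling back shows that $f^{-}[f[\L]]$ is generated by the saturated sets $f^{-}(f(L))$ with $L\in\L$ (indeed $f^{-}(f(L))\supseteq L\in\L$, and conversely any base member $f^{-}(B)\in\L$ equals $f^{-}(f(f^{-}(B)))$). The assumption $\D\leq f^{-}[f[\L]]$ then gives $D\in f^{-}[f[\L]]$, hence an $L\in\L$ with $f^{-}(f(L))\subseteq D$. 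Combining this with $H\cap D=\emp$ yields the key containment $H\cap f^{-}(f(L))=\emp$.

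With this in hand I would exhibit the witnesses in the image filters. Since $f^{-}(f(H))\supseteq H\in\H$, we have $f(H)\in f[\H]$, and likewise $f(L)\in f[\L]$ because $f^{-}(f(L))\supseteq L\in\L$. The crux is to verify that these two images are disjoint. Suppose toward a contradiction that some $y\in f(H)\cap f(L)$; choosing $h\in H$ with $f(h)=y$ and using $y\in f(L)$ forces $h\in f^{-}(f(L))$, so $h\in H\cap f^{-}(f(L))=\emp$, a contradiction. Therefore $f(H)\cap f(L)=\emp$, which exhibits members of $f[\H]$ and of $f[\L]$ that are disjoint, i.e.\ $f[\H]\neg\#f[\L]$, as required.

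I expect the only real obstacle to be the correct identification of $f^{-}[f[\L]]$ and its canonical filter base $\{f^{-}(f(L)):L\in\L\}$; once that saturation is pinned down, the translation of disjointness across $f$ is immediate and the argument is purely elementary. No appeal to the class $\mathbb{D}$, to composability, or to any earlier result is needed beyond the definitions of the image filter, the preimage filter, and meshing.
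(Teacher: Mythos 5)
Your proof is correct, but it is routed differently from the paper's. The paper's proof is a two-line contrapositive: if $f[\H]\#f[\L]$ then, equivalently, $\H\#f^{-}[f[\L]]$ (the standard meshing adjunction between image and preimage filters, invoked without proof), and since meshing with a finer filter implies meshing with any coarser one, $\D\leq f^{-}[f[\L]]$ yields $\H\#\D$. You instead prove the implication directly at the level of elements: you identify $\left\{ f^{-}(f(L)):L\in\L\right\} $ as a base of $f^{-}[f[\L]]$ (your saturation identity $f^{-}(B)=f^{-}(f(f^{-}(B)))$ is right), extract disjoint witnesses $H\in\H$ and $D\in\D$ from $\H\neg\#\D$, pick $L\in\L$ with $f^{-}(f(L))\subseteq D$ --- legitimate, since in this paper $\leq$ is inclusion of filters, so $D\in f^{-}[f[\L]]$ --- and verify $f(H)\cap f(L)=\emp$ by an element chase. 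In effect your argument re-proves from first principles the adjunction that the paper uses as a black box: the paper's version is shorter and isolates a reusable general principle, while yours is self-contained and makes the transfer of disjointness across $f$ completely explicit. There is no gap.
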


\begin{proof}
If $f[\H]\#f[\L]$, equivalently, $\H\#f^{-}[f[\L]]$ then, in particular,
$\H\#\D$ because $\D\leq f^{-}[f[\L]]$. 
\end{proof}
If $\mathbb{D}$ is refinable, given $Y$ as in Definition \ref{def:refinable class},
define the convergence space $\gimel_{\mathbb{D}}$ defined on 
\[
Y_{\infty}=Y\cup\{\infty_{0}\}\cup\{y_{\G}:\G\in\mathbb{D}Y\},
\]
 by 
\[
Y\cup\{\infty_{0}\}\subset\lm_{\gimel_{\mathbb{D}}}\F
\]
 for every $\F\in\mathbb{F}(Y_{\infty})$ and $y_{\G}\in\lim_{\gimel_{\mathbb{D}}}\F$
if $\F$ and $\G^{\uparrow}\wedge\{\infty_{0}\}^{\uparrow}$ do not
mesh, where $\G{}^{\uparrow}$ is the filter generated on $Y_{\infty}$
by $\G$. Note that, by definition, for every $\G\in\mathbb{D}Y$,
\begin{equation}
y_{\G}\notin\adh_{\g}\G^{\uparrow}.\label{eq:adhD}
\end{equation}

\begin{thm}
\label{cor:characterizationD} Let $\mathbb{D}$ be an $\mathbb{F}_{0}$-composable
class of filters. The category $\fix\AdhD$ is simple (in $\mathsf{Conv}$)
if and only if $\mathbb{D}$ is refinable.
\end{thm}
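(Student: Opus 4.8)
The plan is to use the space $\g=\gimel_{\mathbb{D}}$ introduced above as the distinguished object on both sides. Since $\AdhD$ is a concrete reflector, $\fix\AdhD$ is epireflective, so simplicity amounts to the existence of an initially dense object; I will show that refinability is exactly what is needed for $\g$ to be such an object, and conversely that any initially dense $\eta$ yields a witnessing set $Y=|\eta|$ for Definition \ref{def:refinable class}. Assuming $\mathbb{D}$ refinable with $Y$ as in Definition \ref{def:refinable class}, I would first check that $\g\in\fix\AdhD$. As $\AdhD\g\leq\g$ always and every point of $Y\cup\{\infty_{0}\}$ is a limit of every filter, only the points $y_{\G}$ matter. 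If $y_{\G}\notin\lim_{\g}\F$, then $\F$ meshes $\G^{\uparrow}\wedge\{\infty_{0}\}^{\uparrow}$, and this filter lies in $\mathbb{D}$: it is the image $\overline{\mathbb{F}}R(\G)$ of $\G\in\mathbb{D}Y$ under the relation $R\subseteq Y\times Y_{\infty}$ with $R(G)=G\cup\{\infty_{0}\}$, so $\mathbb{F}_{0}$-composability applies. Since a filter meshing $\G^{\uparrow}\wedge\{\infty_{0}\}^{\uparrow}$ cannot have $y_{\G}$ as a $\g$-limit, the computation behind (\ref{eq:adhD}) gives $y_{\G}\notin\adh_{\g}(\G^{\uparrow}\wedge\{\infty_{0}\}^{\uparrow})$, whence $y_{\G}\notin\lim_{\AdhD\g}\F$; thus $\g=\AdhD\g$.

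To see that $\g$ is initially dense, fix $\xi\in\fix\AdhD$ and $\F$ with $x\notin\lim_{\xi}\F$; as $\xi=\AdhD\xi$ there is $\D\in\mathbb{D}$ with $\D\#\F$ and $x\notin\adh_{\xi}\D$. Refinability applied to $\D,\F$ furnishes $\L\in\mathbb{D}(\D)$ with $\L\#\F$ and $f_{0}\colon X\to Y$ with $\D\leq f_{0}^{-}[f_{0}[\L]]$; I then set $\G=f_{0}[\L]\in\mathbb{D}Y$ and define $f\colon X\to Y_{\infty}$ by $f(x)=y_{\G}$ and $f=f_{0}$ off $x$. Continuity can only fail at $x$, since all other values lie in $Y$ and are universal limits; for $\F'$ with $x\in\lim_{\xi}\F'$ one has that $\F'$ does not mesh $\D$ (otherwise $x\in\adh_{\xi}\D$), so by Lemma \ref{lem:notmesh} $f_{0}[\F']$ does not mesh $\G$, and after accounting for the single altered value this yields $y_{\G}\in\lim_{\g}f[\F']$. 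It remains to produce the witness $x\notin\lim_{f^{-}\g}\F$, i.e. that $f[\F]$ meshes $\G^{\uparrow}\wedge\{\infty_{0}\}^{\uparrow}$.

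This witnessing step is where I expect the real difficulty. The mesh $\L\#\F$ transports to $f_{0}[\F]\#\G$, but passing from $f_{0}$ to $f$ could in principle destroy the meshing if, for some $F\in\F$ and $G\in\G$, the set $F\cap f_{0}^{-}(G)$ were exactly $\{x\}$. I would rule this out as follows: such an equality forces $\{x\}\in\F\vee\L$, hence $\F\vee\L=\{x\}^{\uparrow}$ and therefore $\{x\}^{\uparrow}\geq\D$, which gives $x\in\adh_{\xi}\D$, contradicting the choice of $\D$. With the single-point obstruction removed, $f[\F]$ meshes $\G^{\uparrow}\wedge\{\infty_{0}\}^{\uparrow}$, so $f(x)=y_{\G}\notin\lim_{\g}f[\F]$ and $x\notin\lim_{f^{-}\g}\F$. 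As the reverse inequality is automatic from continuity, $\xi=\bigvee_{f\in C(\xi,\g)}f^{-}\g$, so $\g$ is initially dense and $\fix\AdhD$ is simple.

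For the converse, assume $\eta$ is initially dense in $\fix\AdhD$ and put $Y=|\eta|$. Given $X$, $\D\in\mathbb{D}X$ and $\F\#\D$, I form the prime convergence $\sigma$ on $X\cup\{\infty\}$ in which $\infty\in\lim_{\sigma}\mathcal{K}$ iff $\mathcal{K}$ does not mesh $\D$ and every point of $X$ is isolated; exactly as in the proof of Theorem \ref{thm:ADgeqAH} (testing against principal filters at isolated points and against $\D$ at $\infty$) one checks $\sigma\in\fix\AdhD$. Since $\infty\notin\lim_{\sigma}\F$, initial density yields $f\in C(\sigma,\eta)$ with $f(\infty)\notin\lim_{\eta}f[\F]$, and as $\eta=\AdhD\eta$ there is $\G\in\mathbb{D}Y$ with $\G\#f[\F]$ and $f(\infty)\notin\adh_{\eta}\G$. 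Writing $g=f|_{X}$, continuity of $f$ at $\infty$ together with $f(\infty)\notin\adh_{\eta}\G$ forces every ultrafilter refining $g^{-}[\G]$ to refine $\D$, so $\D\leq g^{-}[\G]$. The decisive observation is that $g^{-}[\G]$ is the image of $\G$ under the relation $g^{-1}\subseteq Y\times X$, so $\mathbb{F}_{0}$-composability gives $\L:=g^{-}[\G]\in\mathbb{D}X$; it refines $\D$, meshes $\F$ (because $\G\#f[\F]$), and satisfies $\D\leq g^{-}[\G]\leq g^{-}[g[\L]]$ since $g[\L]\geq\G$. Hence $\L$ and $g$ witness refinability of $\mathbb{D}$, as required.
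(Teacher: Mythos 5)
Your proof is correct, and although it keeps the paper's overall architecture---the same candidate object $\g$ and the same prime test convergence on $X\cup\{\infty\}$---it differs from the paper's execution at three substantive points. First, your verification that $\g=\AdhD\g$ goes in one stroke: you observe that $\G^{\uparrow}\wedge\{\infty_{0}\}^{\uparrow}$ is the image of $\G$ under the relation $y\mapsto\{y,\infty_{0}\}$ (your ``$R(G)=G\cup\{\infty_{0}\}$'' is this relation's action on sets, worth stating precisely), so it lies in $\mathbb{D}Y_{\infty}$ by $\mathbb{F}_{0}$-composability; the paper instead splits into the cases $\infty_{0}\in\ker\F$ and $\infty_{0}\notin\ker\F$, testing with $\{\infty_{0}\}^{\uparrow}$ and $\D^{\uparrow}$ respectively. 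Second, your connecting map alters $f_{0}$ only at $x$ and never takes the value $\infty_{0}$, whereas the paper's map $h$ additionally sends $\ker\D$ to $\infty_{0}$ and runs a case analysis on whether $\ker\D\in\F$; your simpler map forces you to confront the singleton obstruction $F\cap f_{0}^{-}(G)=\{x\}$ directly, and your resolution is sound: since $G\in\G=f_{0}[\L]$ means $f_{0}^{-}(G)\in\L$, such an equality would give $\{x\}\in\F\vee\L$, hence $\{x\}^{\uparrow}=\F\vee\L\geq\L\geq\D$ and $x\in\adh_{\xi}\D$, contradicting the choice of $\D$. This is exactly the delicate point that the paper's construction glides past, and your explicit treatment is arguably cleaner (note that continuity at $x$ also uses, silently but correctly, that the range of your map avoids $\infty_{0}$, so a set $f_{0}(F')$ disjoint from $G$ stays disjoint from $G\cup\{\infty_{0}\}$). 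Third, and most substantially, you prove the converse directly rather than contrapositively: from an initially dense $\eta$ you manufacture the refinability witness as $\L=g^{-}[\G]$, with $\L\in\mathbb{D}X$ by composability along the inverse relation $g^{-1}\subseteq Y\times X$, and you obtain $\D\leq g^{-}[\G]$ by the ultrafilter argument (an ultrafilter $\U\geq g^{-}[\G]$ not refining $\D$ contains the complement of some $D\in\D$, hence $\sigma$-converges to $\infty$, and continuity together with $g[\U]\geq\G$ would put $f(\infty)\in\adh_{\eta}\G$). Your route buys a slightly stronger structural fact---the refining filter can always be taken to be a full preimage filter, so $\D\leq g^{-}[g[\L]]$ holds automatically---while the paper's contrapositive, with $\L=f^{-}[\G]\vee\D_{0}$ and the negated condition (\ref{eq:Ds}), exhibits for each candidate $\tau_{0}$ a single concrete test space on which initial density fails, which is the form exploited in the non-simplicity applications (Theorem \ref{Thm-hypo}). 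One point to make explicit in a final write-up: $g^{-}[\G]$ is a proper filter (each $G\in\G$ meets $g(F)$ for every $F\in\F$ because $\G\#f[\F]$), which is needed before invoking $\mathbb{U}(g^{-}[\G])\neq\emptyset$ and the representation $g^{-}[\G]=\bigcap_{\U\in\mathbb{U}(g^{-}[\G])}\U$.
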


\begin{proof}
Assume that $\mathbb{D}$ is not refinable and let $(Y,\tau_{0})$
with $\tau_{0}=\AdhD\tau_{0}$. Because $\mathbb{D}$ is not refinable,
there is $X$, $\D_{0}\in\mathbb{D}X$ and $\F_{0}\in\mathbb{F}X$
with $\D_{0}\#\F_{0}$ and for every $\L\in\mathbb{D}(\D_{0})$ with
$\L\#\F_{0}$ and every $f\in Y^{X}$, we have $\D\nleq f^{-}[f[\L]]$,
that is, there is $D_{\L,f}\in\D_{0}$ with $D_{\L,f}\notin f^{-}[f[\L]]$,
that is, $D_{\L,f}^{c}:=X\setminus D_{\L,f}$ belongs to $\left(f^{-}[f[\L]]\right)^{\#}$,
equivalently, 
\begin{equation}
\exists D_{\L,f}\in\D_{0}:f(D_{\L_{,}f}^{c})\in(f[\L])^{\#}.\label{eq:Ds}
\end{equation}

Let $\xi$ be the prime convergence on $X\cup\{\infty\}$ defined
by $\infty\in\lim_{\xi}\F$ if and only if $\F$ and $\D_{0}$ do
not mesh. Note that by definition $\infty\notin\adh_{\xi}\D_{0}$.
This is easily seen to be a convergence. Moreover, $\xi=\AdhD\xi$
because if $\infty\in\adh_{\xi}\L$ for every $\L\in\mathbb{D}X$
with $\L\#\F$ then $\D_{0}\neg\#\F$ (for otherwise $\infty\in\adh_{\xi}\D_{0}$
because $\D_{0}\in\mathbb{D}X$). 

In particular, $\infty\notin\lim_{\xi}\F_{0}$. We will see that $\infty\in\lim_{\bigvee_{f\in C(\xi,\tau_{0})}f^{-}\tau_{0}}\F_{0}$
so that $\xi\neq\bigvee_{f\in C(\xi,\tau_{0})}f^{-}\tau_{0}$ and
as a result $\fix\AdhD$ is not simple.

To see this, let $f\in C(\xi,\tau_{0})$ and $\G\in\mathbb{D}Y$ with
$\G\#f[\F_{0}]$. Then $f^{-}[\G]\#\F_{0}$. Either $f^{-}[\G]$ and
$\D_{0}$ do not mesh or they do. In the former case, $\infty\in\lim_{\xi}f^{-}[\G]$
and by continuity, $f(\infty)\in\lim_{\tau_{0}}f[f^{-}[\G]]$. As
$f[f^{-}[\G]]\#\G$, $\infty\in\adh_{\tau_{0}}\G$. In the later case,
the filter $\L:=f^{-}[\G]\vee\D_{0}$ belongs to $\mathbb{D}X$ because
$\mathbb{D}$ is $\mathbb{F}_{0}$-composable. By (\ref{eq:Ds}) and
continuity of $f$, $f(\infty)\in\adh_{\tau_{0}}f[\L]$ because $\infty\in\lim_{\xi}\{D_{\L,f}^{c}\}^{\uparrow}$.
Moreover, $f[\L]\geq f[f^{-}[\G]]\geq\G$ so that $f(\infty)\in\adh_{\xi}\G$.
Hence $f(\infty)\in\lim_{\tau_{0}}f[\F_{0}]$ for every $f\in C(\xi,\tau_{0})$,
that is, $\infty\in\lim_{\bigvee_{f\in C(\xi,\tau_{0})}f^{-}\tau_{0}}\F_{0}$.

Assume now that $\mathbb{D}$ is refinable. We will show that $\gimel_{\mathbb{D}}$
is initially dense in $\fix\AdhD$.

We first check that $\gimel_{\mathbb{D}}=\AdhD\gimel_{\mathbb{D}}$.
If $y\in Y_{\infty}\setminus\lim_{\g}\F$ then $y=y_{\D}$ for some
$\D\in\mathbb{D}Y$ and, and $\F\#(\D^{\uparrow}\wedge\{\infty_{0}\}^{\uparrow})$
so that $\infty_{0}\in\bigcap_{F\in\F,F\notin\D^{\#}}F$. If $\infty_{0}\in\ker\F$
then $\{\infty_{0}\}^{\uparrow}\in\mathbb{D}Y_{\infty}$, $\{\infty_{0}\}^{\uparrow}\#\F$
and $y_{\D}\notin\adh_{\g}\{\infty_{0}\}^{\uparrow}$. Else, there
is $F_{0}\in\F$ with $\infty_{0}\notin F_{0}$. Since $\F\#(\D^{\uparrow}\wedge\{\infty_{0}\}^{\uparrow})$
Then $\D^{\uparrow}\in\mathbb{D}Y_{\infty}$, $\D^{\uparrow}\#\F$
and $y_{\D}\notin\adh_{\g}\D^{\uparrow}$. 

To see that $\g$ is initially dense in $\fix\AdhD$, consider $\xi=\AdhD\xi$
on $X$ and suppose that $x\notin\lim_{\xi}\F$, so that there is
$\D\in\mathbb{D}X$ with $\F\#\D$ and $x\notin\adh_{\xi}\D$. By
refinability, there is $\L\in\mathbb{D}(\D)$ (so that $x\notin\adh_{\xi}\L$)
with $\L\#\F$ and there is $f_{0}:X\to Y$ with $\D\leq f_{0}^{-}[f_{0}[\L]]$. 

Let $h:X\to Y_{\infty}$ be defined by $h(t)=f_{0}(t)$ for $t\notin\{x\}\cup\ker\D$,
$h(x)=y_{\G}$ for $\G:=f_{0}[\L]\in\mathbb{D}Y$ and $h(t)=\infty_{0}$
for $t\in\ker\D$. We show that $h\in C(\xi,\gimel_{\mathbb{D}})$
and $h(x)\notin\lim_{\gimel_{\mathbb{D}}}h[\F]$ so that $\xi\leq\bigvee_{h\in C(\xi,\gimel_{\mathbb{D}})}h^{-}\gimel_{\mathbb{D}}$.

To see that $h\in C(\xi,\gimel_{\mathbb{D}})$, note that if $t\in\lim_{\xi}\H$
and $t\neq x$ then 
\[
h(t)\in Y\cup\{\infty_{0}\}\subset\lm_{\gimel_{\mathbb{D}}}h[\H],
\]
hence we only need to consider the case $x\in\lim_{\xi}\H$. Then
$\D$ and $\H$ do not mesh because $x\notin\adh_{\ensuremath{\xi}}\D$.
In particular $\ker\D\notin\H^{\#}$. Moreover $h(x)=y_{\G}$. If
$x\notin\ker\H$, $h[\H]=f_{0}[\H]$. In view of Lemma \ref{lem:notmesh},
$f_{0}[\H]$ and $\G$ do not mesh so $y_{\G}=h(x)\in\lim_{\gimel_{\mathbb{D}}}h[\H]$.

To see that $h(x)\notin\lim_{\gimel_{\mathbb{D}}}h[\F]$ note that
as $\F\#\L$ then $f_{0}[\F]\#\G$. Moreover, if $\ker\D\in\F$ then
$h[\F]=\{\infty_{0}\}^{\uparrow}$ does not converge to $y_{\G}$.
Else $(\ker\D)^{c}\in\F^{\#}$ and $h[\F]=f_{0}[\F]$ meshes with
$\G$ so $h(x)\notin\lim_{\gimel_{\mathbb{D}}}h[\F]$.
\end{proof}
A class $\mathbb{D}$ of filters is called \emph{fiber-stable }if
there is a set $Y$ such that for every set $X$ and every $\D\in\mathbb{D}X$
there is $f:X\to Y$ with $\D\leq f^{-}[f[\D]]$. Of course, every
fiber-stable class is also refinable, as we can then take $\L=\D$
in the definition of a refinable class. Hence, it is sufficient for
$\fix\AdhD$ to be simple that the class $\mathbb{D}$ be fiber-stable. 
\begin{example}
\label{exa:PrTopsimple} The category $\mathsf{PreTop}$ of pretopologies
is $\fix\operatorname{A}_{\mathbb{F}_{0}}$ and is simple because
$\mathbb{F}_{0}$ is fiber-stable, hence refinable. Indeed, taking
$Y=\{0,1\}$, then for every $X$ and $\{A\}^{\uparrow}\in\mathbb{F}X$,
$\{A\}^{\uparrow}\leq f^{-}[f[\{A\}^{\uparrow}]]$ where $f(x)=1$
if and only if $x\in A$. That $\mathsf{Prtop}$ is simple is known
from \cite[II.2]{Bour-espaces}. Note that $\gimel_{\mathbb{F}_{0}}$
is an initially dense object of $\mathsf{Prtop}$ that is different
from the Bourdaud pretopology on 3 points. Indeed, it has 6 points. 
\end{example}

\begin{example}
\label{exa:ParTopsimple} The category $\mathsf{ParaTop}$ of paratopologies
is $\fix\operatorname{A}_{\mathbb{F}_{1}}$ and is simple because
$\mathbb{F}_{1}$ is fiber-stable, hence refinable. Take $Y=\omega$.
Given $X$ and $\D\in\mathbb{F}_{1}X$, we can deal with $\D$ with
a two-valued map as in Example \ref{exa:PrTopsimple} if $\D$ is
principal. Otherwise, $\D\vee(\ker\D)^{c}$ is a non-degenerate free
countably based filter and thus has a decreasing filter base $(H_{n})_{n\in\omega}$
with $H_{1}=X\setminus\ker\D$. Consider $f:X\to Y$ defined by $f(x)=1$
if $x\in\ker\D$ and $f(x)=n>1$ if $x\in H_{n-1}\setminus H_{n}$.
Then $f^{-}[f[\D]]\geq\D$. That $\mathsf{Partop}$ is simple is \cite[Theorem 1]{Woj-Three}
and $\gimel_{\mathbb{F}_{1}}$ is a slight simplification of the initially
dense object $\gimel$ used in \cite{Woj-Three}.
\end{example}

Though fiber-stability is often more practical to check, there are
refinable classes that are not fiber-stable:
\begin{example}
The class $\widehat{\mathbb{F}_{1}}$ is refinable but not fiber-stable.
$\widehat{\mathbb{F}_{1}}$ is refinable because $\operatorname{A}_{\mathbb{F}_{1}}=\operatorname{A}_{\widehat{\mathbb{F}_{1}}}$
is simple. To see that $\widehat{\mathbb{F}_{1}}$ is not fiber-stable,
given any set $Y$ let $X$ be a set of non-measurable cardinality
$\card X>\card Y$. The cofinite filter $\H$ on $X$ is then a bisequential
filter (See \cite[Example 10.15]{michael}), that is, $\H\in\widehat{\mathbb{F}_{1}}$.
On the other hand, there is $y\in Y$ with an infinite fiber $A=f^{-}(y)$,
so that $A\#\H$ and thus $f(A)=\{y\}\in(f[\H])^{\#}$. Hence, if
$x\in A$ then $X\setminus\{x\}\in\H$ but $X\setminus\{x\}\notin f^{-}[f[\H]]$.
\end{example}

\section{Non-simplicity of the class of $\mu$-hypotopologies}

For background on set theory we refer the reader to \cite{Jech}.
Let $\mu$ be an infinite cardinal. A filter $\cal H$ is $\mu$-\emph{complete}
provided $\bigcap\cal H'\in\cal H$ for every $\cal H'\sub\cal H$
with $\abv{\cal H'}<\mu$. Note that each filter is $\aleph_{0}$-complete.
A convergence $\xi$ on $X$ is a $\mu$-\emph{hypotopology} iff it
is $\bh$-adherence-determined, where $\bh$ is the class of all $\mu$-complete
filters. In particular, a convergence $\xi$ is a \emph{pseudotopology}
if and only if it is an $\aleph_{0}$-hypotopology (is $\mathbb{F}$-adherence-determined
with $\mathbb{F}$ being the class of all filters) and $\xi$ is a
\emph{hypotopology} if and only if it is an $\aleph_{1}$-hypotopology
(is $\bh$-adherence-determined for $\bh$ consisting of all countably
complete filters).

Let $\gl$ is a regular uncountable cardinal and $A\sub\gl$. We say
that $A$ is \emph{unbounded} in $\gl$ if there are no upper bound
on $A$ in $\gl$ and we say that $A$ is closed in $\gl$ if $\sup A'\in A$
for any $A'$ that is bounded in $\gl$ (this is equivalent to $A$
being closed in the order topology on $\gl$). The closed unbounded
subsets of $\gl$ form a filter base and the filter on $\gl$ generated
by them is called the \emph{closed unbounded filter} on $\gl$. This
filter is $\gl$-complete.
\begin{lem}
\label{lem4} Let $Y$ be a set, $\gl>\t{card}\,Y$ be an uncountable
regular cardinal and $\cal C$ be the closed unbounded filter on $\gl$.
Then for each $f:\gl\to Y$ there exists a uniform $\gl$-complete
filter $\cal F_{f}$ on $\gl$ such that $f\bof{\cal F_{f}}=f\bof{\cal C}$
and $\cal F_{f}$ does not mesh with $\cal C$.
\end{lem}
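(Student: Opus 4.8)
The plan is to analyze $f$ through its fibers. Write $S_{y}:=f^{-}(y)$ for $y\in Y$, so that $\{S_{y}:y\in Y\}$ partitions $\lambda$ into at most $\card Y<\lambda$ pieces, and set
\[
T:=\{y\in Y:S_{y}\text{ is stationary in }\lambda\}.
\]
Since the closed unbounded filter $\cal C$ is $\lambda$-complete, its dual nonstationary ideal is $\lambda$-complete as well, so a union of fewer than $\lambda$ nonstationary sets is nonstationary. First I would use this to compute $f[\cal C]$: for $B\subseteq Y$ we have $f^{-}(B)\in\cal C$ iff $\bigcup_{y\notin B}S_{y}=f^{-}(Y\setminus B)$ is nonstationary iff every $S_{y}$ with $y\notin B$ is nonstationary iff $T\subseteq B$. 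Hence $f[\cal C]=\{T\}^{\uparrow}$, the principal filter generated by $T$ (in particular $T\neq\emptyset$, since $\lambda$ is stationary in itself). The whole point is then to build $\cal F_{f}$ whose image is also exactly $\{T\}^{\uparrow}$ but which, unlike $\cal C$, carries a nonstationary set.

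Next I would fix, for each $y\in T$, a nonstationary set $A_{y}\subseteq S_{y}$ with $\card A_{y}=\lambda$, and put $N:=\bigcup_{y\in T}A_{y}$. By $\lambda$-completeness of the nonstationary ideal $N$ is nonstationary, $\card N=\lambda$, $N\subseteq f^{-}(T)$, and $N\cap S_{y}=A_{y}$ has cardinality $\lambda$ for each $y\in T$. I would then define
\[
\cal F_{f}:=\{F\subseteq\lambda:\card(N\setminus F)<\lambda\}.
\]
Regularity of $\lambda$ makes this a uniform $\lambda$-complete proper filter: a union of fewer than $\lambda$ sets each of size less than $\lambda$ has size less than $\lambda$, which gives both $\lambda$-completeness and that every member meets $N$ in a set of size $\lambda$, hence has cardinality $\lambda$. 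Since $N\in\cal F_{f}$ is nonstationary, $\cal F_{f}$ contains a set disjoint from some club, so it does not mesh with $\cal C$. For the image: $f^{-}(T)\supseteq N$ gives $f^{-}(T)\in\cal F_{f}$, so $\{T\}^{\uparrow}\subseteq f[\cal F_{f}]$; conversely if $f^{-}(B)\in\cal F_{f}$ and some $y_{0}\in T\setminus B$ existed, then $A_{y_{0}}\subseteq N\setminus f^{-}(B)$ would force $\card(N\setminus f^{-}(B))=\lambda$, a contradiction, so $T\subseteq B$. Thus $f[\cal F_{f}]=\{T\}^{\uparrow}=f[\cal C]$.

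The main obstacle is the one input postponed above: producing, inside an arbitrary stationary $S\subseteq\lambda$, a nonstationary subset of full cardinality $\lambda$. I would prove this directly, without invoking Solovay's splitting theorem. Since $S$ is stationary it is unbounded, so $\card S=\lambda$ and $S$ has order type $\lambda$; let $e\colon\lambda\to\lambda$ be its increasing enumeration and set $A:=\{e(\xi):\xi\text{ a successor ordinal}\}$, which has cardinality $\lambda$. The set $C_{0}$ of closure points of $e$ (those $\delta$ with $e(\xi)<\delta$ for all $\xi<\delta$) is a club, as is the set $C_{1}$ of its limit points. For $\delta\in C_{1}$, if $\delta=e(\eta)$ then $\eta<\delta$ is impossible (it would give $e(\eta)<\delta$) and $\eta>\delta$ is impossible (since $e(\eta)\geq\eta$), so $\eta=\delta$; but $\delta$ is a limit ordinal, so $\eta$ is not a successor and $\delta\notin A$. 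Hence $C_{1}\cap A=\emptyset$ and $A$ is nonstationary. Applying this with $S=S_{y}$ furnishes the sets $A_{y}$ and completes the construction of $\cal F_{f}$.
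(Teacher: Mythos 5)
Your proof is correct, and it takes a genuinely different route from the paper's. The paper never computes $f\bof{\cal C}$; instead it builds, by a transfinite recursion handling all $\gl$-sized fibers simultaneously, a club $C$ that splits every such fiber into two pieces of cardinality $\gl$, and then transports $\cal C$ by a fiber-preserving bijection $h$ swapping $P\cap C$ with $P\sem C$ inside each large fiber, setting $\cal F_{f}:=h\bof{\cal C}$; uniformity and $\gl$-completeness of $\cal F_{f}$ are then inherited automatically from $\cal C$ through the bijection. You instead identify the image filter exactly --- $f\bof{\cal C}=\set{T}^{\uparrow}$ is principal, generated by the set $T$ of values with stationary fiber, via $\gl$-completeness of the nonstationary ideal and $\card Y<\gl$ --- and then only need some filter with nonstationary support and the same principal image, which your co-$<\gl$ filter on the nonstationary set $N$ provides; the one nontrivial input is the elementary fact that every unbounded $S\sub\gl$ contains a nonstationary subset of cardinality $\gl$ (the successor-indexed points of the increasing enumeration avoid the club of limit closure points --- note you implicitly use that $C_{1}\sub C_{0}$, which holds since $C_{0}$ is closed). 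Your route is arguably more transparent: it replaces the simultaneous recursion and the bijection trick with a direct computation, and yields the extra information that $f\bof{\cal C}$ is principal; the cost is that uniformity and $\gl$-completeness of $\cal F_{f}$ must be checked by hand from regularity of $\gl$, whereas the paper's $\cal F_{f}$ is an isomorphic copy of the club filter itself, a structurally stronger conclusion than the lemma requires.
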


\begin{proof}
Let $f:\gl\to Y$ be arbitrary. Define $\cal P:=\set{f^{-}\of y:y\in f\bof{\gl}}$
to be the family of fibers of $f$ with 
\[
\cal P_{0}:=\set{P\in\cal P:\t{card}\,P<\gl}
\]
and $\cal P_{1}:=\cal P\sem\cal P_{0}$. Note that the regularity
of $\gl$ implies that $\t{card}\,P_{0}<\gl$, where
\[
P_{0}:=\bigcup_{P\in\cal P_{0}}P
\]
and so $\cal P_{1}$ is not empty. We claim that there exists $C\in\cal C$
such that both $P\cap C$ and $P\sem C$ have cardinality $\gl$ for
every $P\in\cal P_{1}$. Let $\cal P_{1}$ be enumerated as $\set{P_{\xi}:\xi<\gk}$
for some cardinal $\gk\le\t{card}\,Y$.

We will use transfinite induction to construct two sequences $\cur{A_{\ga}}_{\ga<\gl}$
and $\cur{B_{\ga}}_{\ga<\gl}$ of subsets of $\gl$ such that
\begin{itemize}
\item any two distinct members of the family $\set{A_{\ga}:\ga<\gl}\cup\set{B_{\ga}:\ga<\gl}$
are disjoint.
\item for any $\ga<\gl$ we have $A_{\ga}=\set{\gga_{\ga,\xi}:\xi<\gk}$
and $B_{\ga}=\set{\gd_{\ga,\xi}:\xi<\gk}$ with $\gga_{\ga,\xi},\gd_{\ga,\xi}\in P_{\xi}$
for every $\xi<\gk$
\item $\clof{\bigcup_{\ga<\gl}A_{\ga}}\cap\bigcup_{\ga<\gl}B_{\ga}=\emp$,
where $\cl$ is the closure operation in the order topology on $\gl$.
\end{itemize}
Taking $C:=\clof{\bigcup_{\ga<\gl}A_{\ga}}$ satisfies the requirements.

Suppose that $\gb<\gl$ is an ordinal such that $A_{\ga}$ and $B_{\ga}$
are defined for each $\ga<\gb$ and that
\begin{itemize}
\item any two distinct members of the family $\set{A_{\ga}:\ga<\gb}\cup\set{B_{\ga}:\ga<\gb}$
are disjoint.
\item for any $\ga<\gb$ we have $A_{\ga}=\set{\gga_{\ga,\xi}:\xi<\gk}$
and $B_{\ga}=\set{\gd_{\ga,\xi}:\xi<\gk}$ with $\gga_{\ga,\xi},\gd_{\ga,\xi}\in P_{\xi}$
for every $\xi<\gk$
\item $\clof{\bigcup_{\ga<\gb}A_{\ga}}\cap\bigcup_{\ga<\gb}B_{\ga}=\emp$.
\end{itemize}
For each $\xi<\gk$, the set
\[
P_{\xi}':=\set{\gga_{\ga,\xi}:\ga<\gb}\cup\set{\gd_{\ga,\xi}:\ga<\gb}
\]
is a subset of $P_{\xi}$ of cardinality $<\gl$ so there is $\gga_{\gb,\xi}\in P_{\xi}$
with $\gga_{\gb,\xi}>\sup P_{\xi}'$. Let $A_{\gb}:=\set{\gga_{\gb,\xi}:\xi<\gk}$. 

For each $\xi<\gk$, let $\gd_{\gb,\xi}\in P_{\xi}$ be such that
$\gd_{\gb,\xi}>\sup A_{\gb}.$ Let $B_{\gb}:=\set{\gd_{\gb,\xi}:\xi<\gb}$.
It is clear that the obtained sequences $\cur{A_{\ga}}_{\ga<\gl}$
and $\cur{B_{\ga}}_{\ga<\gl}$ satisfy the requirements.

Let $C\in\cal C$ be such that both $P\cap C$ and $P\sem C$ have
cardinality $\gl$ for every $P\in\cal P_{1}$.

Since $\t{card}\,P_{0}<\gl$, it follows that $P_{1}:=\bigcup_{P\in\cal P_{1}}\in\cal C$
so $P_{1}\cap C\in\cal C$. Let $h:P_{1}\cap C\to P_{1}\sem C$ be
a bijection such that if $x\in P\cap C$ for some $P\in\cal P_{1}$,
then $h\of x\in P\sem C$. Extend $h$ to a bijection $h:\gl\to\gl$
by declaring that $h\of x:=x$ whenever $x\in P_{0}$. Let $\cal F_{f}:=h\bof{\cal C}$.
Since $h$ is a bijection and $\cal C$ is a uniform $\gl$-complete
filter on $\gl$, it follows that $\cal F_{f}$ is uniform and $\gl$-complete.
It is clear that $f\bof{\cal F_{f}}=f\bof{\cal C}$. Since $P_{1}\cap C\in\cal C$
and $P_{1}\sem C\in\cal F_{f}$, it follows that $\cal F_{f}$ does
not mesh with $\cal C$.
\end{proof}
\begin{thm}
\label{Thm-hypo} For any infinite cardinal $\mu$ the class of $\mu$-hypotopologies
is not simple.
\end{thm}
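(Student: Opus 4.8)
The plan is to apply the characterization from Theorem~\ref{cor:characterizationD}: the class of $\mu$-hypotopologies equals $\fix\AdhH$ where $\bh$ is the class of all $\mu$-complete filters, so it suffices to prove that $\bh$ is \emph{not} refinable. To falsify refinability, I must defeat every candidate target set $Y$. So let an arbitrary set $Y$ be given; I will produce a set $X$, a filter $\D\in\bh X$, and a meshing filter $\F\in\mathbb{F}X$ witnessing that no admissible refinement $\L$ and map $f:X\to Y$ can satisfy $\D\leq f^{-}[f[\L]]$.

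The key idea is to take $X=\gl$ for a regular uncountable cardinal $\gl>\card Y$ (such $\gl$ exists for any set $Y$, e.g.\ the successor of $\card Y$, which is regular), and to let $\D=\cal C$ be the closed unbounded filter on $\gl$, which is $\gl$-complete and hence $\mu$-complete for every $\mu\le\gl$. The role of the meshing filter $\F$ is played by the filters $\cal F_f$ manufactured in Lemma~\ref{lem4}: for each $f:\gl\to Y$, that lemma gives a uniform $\gl$-complete filter $\cal F_f$ with $f[\cal F_f]=f[\cal C]$ that does \emph{not} mesh with $\cal C$. First I would unwind what refinability of $\bh$ would demand here. If $\bh$ were refinable with witness $Y$, then, taking $\D=\cal C$ and an appropriate meshing $\F$, there would be $\L\in\bh(\cal C)$ with $\L\#\F$ and some $f:\gl\to Y$ with $\cal C\leq f^{-}[f[\L]]$. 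By Lemma~\ref{lem:notmesh}, $\cal C\leq f^{-}[f[\L]]$ forces the implication $\H\,\neg\#\,\cal C\then f[\H]\,\neg\#\,f[\L]$.

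The contradiction is then engineered through $\cal F_f$. Since $\cal F_f$ does not mesh with $\cal C$, I can use $\cal F_f$ as the filter $\H$ in Lemma~\ref{lem:notmesh}, concluding that $f[\cal F_f]$ does not mesh with $f[\L]$. But $\L\geq\cal C$ gives $f[\L]\geq f[\cal C]=f[\cal F_f]$, so $f[\cal F_f]$ refines $f[\L]$ and in particular meshes with it (any filter meshes with something it refines, as they share a common refinement) --- the desired contradiction. The remaining bookkeeping is to arrange the meshing witness $\F$ so that the definition of refinability is genuinely invoked: one natural choice is to let $\F$ itself be built so that every $f$ must produce $\cal F_f$ in the role forced above. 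The main obstacle, and the technical heart of the argument, is precisely the construction already isolated in Lemma~\ref{lem4} --- producing, uniformly over all $f:\gl\to Y$, a $\gl$-complete filter that has the same $f$-image as $\cal C$ yet fails to mesh with $\cal C$; given that lemma, the non-refinability of $\bh$ and hence the non-simplicity of $\fix\AdhH$ follow by the short diagonal argument above together with Theorem~\ref{cor:characterizationD}.
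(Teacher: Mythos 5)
Your proposal is correct and is essentially the paper's own proof: choose $X=\gl$ regular uncountable, let $\D=\mathcal{C}$ be the closed unbounded filter, and defeat every $\L\in\mathbb{H}(\mathcal{C})$ and $f:\gl\to Y$ by applying Lemma \ref{lem:notmesh} with $\H=\F_{f}$ from Lemma \ref{lem4}, then conclude via Theorem \ref{cor:characterizationD}. Two minor tidying points: your example $\gl=\left(\card Y\right)^{+}$ need not satisfy $\gl\geq\mu$, which you need for $\mathcal{C}$ to be $\mu$-complete, so take $\gl$ regular with both $\gl>\card Y$ and $\gl\geq\mu$; and your closing worry about ``arranging'' the meshing witness $\F$ is moot, since your contradiction never uses $\L\#\F$ --- you can simply take $\F=\D=\mathcal{C}$, exactly as the paper does.
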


\begin{proof}
We show that the class $\mathbb{D}$ of $\mu$-complete filters is
not refinable, that is, for every $Y$ there is $X$ (with $X=\lambda\geq\mu$
a regular uncountable cardinal), and there is $\D\in\mathbb{D}X$
and $\F\#\D$ (taking $\D=\F=\mathcal{C}$ the closed unbounded filter
on $\gl$) such that for every $\L\in\mathbb{D}(\D)$ with $\L\#\F$
and every $f:X\to Y$, $\D\nleq f^{-}[f[\L]]$. Indeed, if $\L\geq\mathcal{D}$
then for every $f:X\to Y$, take $\F_{f}$ as in Lemma \ref{lem4}
to the effect that$f[\L]\geq f[\D]=f[\F_{f}]$ and thus $f[\L]\#f[\F_{f}]$,
equivalently, $\F_{f}\#f^{-}[f[\L]]$. As $\F_{f}$ does not mesh
with $\D$, $f^{-}[f[\L]]\ngeq\D$. The conclusion follows from Theorem
\ref{cor:characterizationD}.
\end{proof}
\begin{rem}
Note that Lemma \ref{lem4} is the key to Theorem \ref{Thm-hypo}
and a direct proof based on this lemma rather than through Theorem
\ref{cor:characterizationD} is relatively easy: for $\lambda$ and
$\mathcal{C}$ as in Lemma \ref{lem4}, let $\xi$ be a convergence
on $\gl$ defined by $\ga\in\lim_{\xi}\cal F$ iff $\cal F=\up{\set{\ga}}$
for $\ga>0$ and $0\in\lim_{\xi}\cal F$ iff $\bigcap\cal F\sub\set 0$
and $\cal F$ does not mesh with $\cal C$. This convergence can be
shown to be a $\mu$-hypotopology. Now, for every convergence space
$(Y,\tau)$ and infinite $\mu$, pick $\lambda\geq\mu$ uncountable
and non-measurable. It is not difficult to verify, using Lemma \ref{lem4},
that the corresponding convergence $\xi$ satisfies $\xi\neq\bigvee_{f\in C(\xi,\tau)}f^{-}\tau$.
\end{rem}

The following answers affirmatively \cite[Problem 3]{Woj-Three}.
\begin{cor}
\label{Cor-hypo} The class of hypotopologies is not simple.
\end{cor}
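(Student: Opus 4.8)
The plan is to derive Corollary \ref{Cor-hypo} as the special case $\mu = \aleph_1$ of Theorem \ref{Thm-hypo}. Recall from the excerpt that a convergence is a hypotopology precisely when it is $\aleph_1$-hypotopology, that is, when it is $\mathbb{H}$-adherence-determined for $\mathbb{H}$ the class of countably complete (equivalently $\aleph_1$-complete) filters. Since $\aleph_1$ is an infinite cardinal, Theorem \ref{Thm-hypo} applies verbatim with $\mu = \aleph_1$ and immediately yields that the class of $\aleph_1$-hypotopologies, i.e. the hypotopologies, is not simple.

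First I would invoke the definitional identification recorded in the text: a convergence $\xi$ is a hypotopology if and only if it is an $\aleph_1$-hypotopology. This is stated explicitly just before Lemma \ref{lem4}, so no new argument is needed — it is a matter of matching the cardinal $\mu = \aleph_1$ to the class of countably complete filters $\mathbb{F}_{\wedge 1}$. With this identification in hand, the class of hypotopologies \emph{is} the class of $\aleph_1$-hypotopologies, and ``not simple'' for one is literally ``not simple'' for the other.

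There is essentially no obstacle here: the corollary is a direct instantiation of the already-established theorem. The only thing to verify is that $\aleph_1$ is a legitimate choice of $\mu$, which it is since $\aleph_1$ is an infinite cardinal, so the hypothesis of Theorem \ref{Thm-hypo} is met. The entire content of the proof is therefore a single appeal to Theorem \ref{Thm-hypo} with the stated value of $\mu$.

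\begin{proof}
By definition, a convergence is a hypotopology if and only if it is an $\aleph_{1}$-hypotopology. Since $\aleph_{1}$ is an infinite cardinal, Theorem \ref{Thm-hypo} applies with $\mu=\aleph_{1}$, showing that the class of $\aleph_{1}$-hypotopologies is not simple. Hence the class of hypotopologies is not simple.
\end{proof}
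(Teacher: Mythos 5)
Your proof is correct and matches the paper's own argument exactly: the paper also derives Corollary \ref{Cor-hypo} by observing that hypotopologies are precisely the $\aleph_{1}$-hypotopologies and applying Theorem \ref{Thm-hypo} with $\mu=\aleph_{1}$. Your version is just a more explicit spelling-out of the same one-line instantiation.
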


\begin{proof}
Hypotopologies are $\aleph_{1}$-hypotopologies.
\end{proof}
Moreover, we recover the main result of \cite{LoLo-nonsimp}.
\begin{cor}
The class of pseudotopologies is not simple.
\end{cor}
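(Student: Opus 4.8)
The plan is to read this off as the special case $\mu=\aleph_{0}$ of Theorem \ref{Thm-hypo}. The one point worth making explicit is the identification of pseudotopologies with $\aleph_{0}$-hypotopologies, which is exactly the observation recorded just before Lemma \ref{lem4}: since every filter is $\aleph_{0}$-complete, the class $\mathbb{H}$ of $\aleph_{0}$-complete filters coincides with the class $\mathbb{F}$ of all filters, and by definition a convergence is a pseudotopology precisely when it is $\mathbb{F}$-adherence-determined, i.e. when it is an $\aleph_{0}$-hypotopology. Thus the category $\fix\operatorname{A}_{\mathbb{F}}=\mathsf{PsTop}$ is literally the category of $\aleph_{0}$-hypotopologies.

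Granting this identification, applying Theorem \ref{Thm-hypo} with $\mu=\aleph_{0}$ gives at once that the class of pseudotopologies is not simple, and nothing further is required. There is no genuine obstacle at this stage: all the substantive work has already been done in Lemma \ref{lem4} and in the reduction of Theorem \ref{Thm-hypo} to the failure of refinability via Theorem \ref{cor:characterizationD}.

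Should one prefer a route that does not invoke Theorem \ref{Thm-hypo} as a black box, the alternative plan is to apply Theorem \ref{cor:characterizationD} directly and verify that $\mathbb{F}$ is not refinable. Given any target set $Y$, I would take $X=\lambda$ for a regular uncountable cardinal $\lambda>\card Y$ and set $\mathcal{D}=\mathcal{F}=\mathcal{C}$, the closed unbounded filter on $\lambda$, which meshes with itself. For every $\mathcal{L}\in\mathbb{F}(\mathcal{C})$ and every $f:\lambda\to Y$, Lemma \ref{lem4} produces a uniform $\lambda$-complete filter $\mathcal{F}_{f}$ with $f[\mathcal{F}_{f}]=f[\mathcal{C}]\leq f[\mathcal{L}]$ and $\mathcal{F}_{f}\neg\#\,\mathcal{C}$; hence $\mathcal{F}_{f}\#f^{-}[f[\mathcal{L}]]$ while $\mathcal{F}_{f}\neg\#\,\mathcal{D}$, forcing $\mathcal{D}\nleq f^{-}[f[\mathcal{L}]]$. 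This exhibits the failure of refinability, so Theorem \ref{cor:characterizationD} yields the non-simplicity of $\fix\operatorname{A}_{\mathbb{F}}=\mathsf{PsTop}$. The first (one-line) route is the natural choice.
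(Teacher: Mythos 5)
Your proposal is correct and matches the paper's proof, which is precisely the one-line reduction ``pseudotopologies are $\aleph_{0}$-hypotopologies'' followed by Theorem \ref{Thm-hypo} with $\mu=\aleph_{0}$. Your optional second route merely unwinds the paper's own proof of Theorem \ref{Thm-hypo} (non-refinability of $\mathbb{F}$ via Lemma \ref{lem4} and Theorem \ref{cor:characterizationD}) in this special case, so it is not a genuinely different argument.
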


\begin{proof}
Pseudotopologies are $\aleph_{0}$-hypotopologies.
\end{proof}

\end{document}